%
%
%
%
\documentclass{amsart}

\newtheorem{theorem}{Theorem}[section]
\newtheorem{lemma}[theorem]{Lemma}

\theoremstyle{definition}

\newtheorem{example}[theorem]{Example}

\newtheorem{proposition}[theorem]{Proposition}
\newtheorem{corollary}[theorem]{Corollary}

\theoremstyle{remark}

\numberwithin{equation}{section}



\begin{document}

\title[ Evolution  of the first eigenvalue of weighted $p$-Laplacian]{ Evolution  of the first eigenvalue of weighted $p$-Laplacian along the 
Ricci-Bourguignon flow }

\author{Shahroud Azami}
\address{Department of Mathematics, Faculty of Sciences, Imam Khomeini International University, Qazvin, Iran. }

\email{azami@sci.ikiu.ac.ir}



\subjclass[2010]{ 58C40;  53C44, 53C21}



\keywords{Laplace, Ricci-Bourguignon flow, eigenvalue.}
\begin{abstract}
Let $M$ be an $n$-dimensional closed Riemannian manifold with metric $g$, $d\mu=e^{-\phi(x)}d\nu$ be the weighted measure and $\Delta_{p,\phi}$ be the weighted $p$-Laplacian.
 In this article we will investigate monotonicity for
the first eigenvalue problem of the weighted $p$-Laplace operator acting on the
space of functions along the Ricci-Bourguignon  flow on closed Riemannian manifolds. We find the first  variation
formula for the eigenvalues of  the weighted $p$-Laplacian on a closed Riemannian manifold
evolving by the Ricci-Bourguignon  flow  and we obtain  various monotonic quantities. At the end we find some applications in $2$-dimensional  and  $3$-dimensional manifolds and give an example.
\end{abstract}

\maketitle
\section{Introduction}
A smooth metric measure space is a triple $(M,g, d\mu)$, where $g$ is a metric, $d\mu=e^{-\phi(x)}d\nu$ is the weighted volume measure on $(M,g)$ related to function $\phi\in C^{\infty}(M)$ and  $d\nu$ is  the Riemannian volume measure. Such spaces have been used more widely in the work of mathematicians, for instance, Perelman used it in \cite{GP}. Let $M$ be an $n$-dimensional closed Riemannian manifold with metric $g$.

Over the last few years  the  geometric flows  as  the Ricci-Bourguignon flow  have been a topic of active
research interest in both mathematics and physics. A geometric flow is an evolution of a geometric structure under
a differential equation related to a functional on a
manifold, usually associated with some curvature. The  family
${g(t)}$ of Riemannian metrics on $M$
 is called a   Ricci-Bourguignon flow when it satisfies the equations\\
\begin{equation}\label{rb}
\frac{d}{dt}g(t)=-2Ric(g(t))+2\rho R(g(t))g(t)=-2(Ric-\rho Rg) ,\\\
\end{equation}
with the initial condition
\begin{equation*}
g(0)=g_{0}\\
\end{equation*}
where $Ric$  is the Ricci tensor of $g(t)$, $R$ is the scalar curvature and $\rho$ is a real constant.  When $\rho=0$, $\rho=\frac{1}{2}$, $\rho=\frac{1}{n}$  and $\rho=\frac{1}{2(n-1)}$,  the tensor $Ric-\rho Rg$  corresponds to  the Ricci tensor, Einstein tensor, the traceless Ricci tensor and Schouten tensor respectively. In fact the Ricci-Bourguignon  flow is a system of partial differential equations  which was introduced   by Bourguignon for the first time in
 1981 (see \cite{JPB}).
Short time
existence and uniqueness for
solution to the Ricci-Bourguignon flow on $[0,T)$ have been shown  by Catino and et'al in  ~\cite{GC} for $\rho<\frac{1}{2(n-1)}$.
When  $\rho=0$,
the Ricci-Bourguignon flow is the Ricci flow.\\
 Let $f:M\to \mathbb{R}$, $f\in W^{1,p}(M)$ where  $W^{1,p}(M)$ is the Sobolev space.
For, $p\in [1,+\infty)$, the $p$-Laplacian of  $f$ defined as
\begin{equation}
\Delta_{p}f=div(|\nabla f|^{p-2}\nabla f)=|\nabla f|^{p-2}\Delta f+(p-2)|\nabla f|^{p-4}(Hess f)(\nabla f, \nabla f).
\end{equation}
The Witten-Laplacian is defined by $\Delta_{\phi}=\Delta-\nabla \phi . \nabla$, which is a symmetric diffusion operator on $L^{2}(M,\mu)$ and is  self-adjoint.
Now, for $p\in[1,+\infty)$ and any smooth function $f$ on $M$, we define the weighted $p$-Laplacian on $M$ by
\begin{equation}
\Delta_{p,\phi}f=e^{\phi}div \left( e^{-\phi}|\nabla f|^{p-2}\nabla f\right)=\Delta_{p}f-|\nabla f|^{p-2}\nabla \phi.\nabla f.
\end{equation}
In
the  weighted $p$-Laplacian when $\phi$ is a constant function, the weighted $p$-Laplace operator is just the  $p$-Laplace operator and  when $p=2$, the weighted $p$-Laplace operator is  the Witten-Laplace operator.\\

Let  $\Lambda$ satisfies in $-\Delta_{p,\phi}f =\Lambda | f|^{p-2}f,$ for some $f\in W^{1,p}(M)$, in this case we say $\Lambda$ is an eigenvalue of the weighted $p$-Laplacian  $\Delta_{p,\phi}$ at time $t\in[0,T)$. Notice that   $\Lambda$  equivalently satisfies in
\begin{equation}
-\int_{M}f \Delta_{p,\phi}f d\mu=\Lambda\int_{M} | f|^{p}d\mu,
\end{equation}
where $d\mu=e^{-\phi(x)}d\nu$ and $d\nu$ is the Riemannian volume measure and using the integration by parts it results that
\begin{equation}
\int_{M}|\nabla f|^{p}\,d\mu=\Lambda\int_{M} | f|^{p}d\mu,
\end{equation}
in above equation, $f(x,t)$ called eigenfunction corresponding to eigenvalue $\Lambda(t)$. The first non-zero eigenvalue $\lambda(t)=\lambda(M, g(t),d\mu)$ is defined as follows
\begin{equation}
\lambda(t)=\mathop {\inf} \limits_{0\neq f\in W_{0}^{1,p}(M)} \left\{ \int_{M}|\nabla f|^{p}d\mu:\,\,\,\,\int_{M}|f|^{p}d\mu=1 \right\},
\end{equation}
where $W_{0}^{1,p}(M)$ is the completion of $C_{0}^{\infty}(M)$ with respect Sobolev norm
\begin{equation}
||f||_{W^{1,p}}=\left(\int_{M}|f|^{p}d\mu+\int_{M}|\nabla f|^{p}d\mu \right)^{\frac{1}{p}}.
\end{equation}
The eigenvalue problem for  weighted $p$-Laplacian has been extensively  studied in the literature  \cite{LFW, LFW1}. \\

The problem of  monotonicity  of the  eigenvalue of  geometric  operator is a known and an intrinsic  problem. Recently many mathematicians study properties of evolution  of the  eigenvalue of  geometric  operators, for instance, Laplace, $p$-Laplace, Witten-Laplace, along  various geometric flows, for example,  Yamabe flow, Ricci flow, Ricci-Bourguignon flow, Ricci-harmonic  flow and mean curvature flow. The main study of evolution of the  eigenvalue of  geometric  operator along the geometric flow  began  when  Perelman  in \cite{GP}  showed
 that the first eigenvalue of the geometric  operator $-4\Delta+R$ is nondecreasing along the Ricci flow, where   $R$ is  scalar curvature.

Then Cao \cite{XC2}  and  Zeng and et'al \cite{FZ}   extended the geometric operator $-4\Delta+R$ to the operator $-\Delta+cR$ on closed Riemannian manifolds, and  investigated the monotonicity of eigenvalues of the operator $-\Delta+cR$  under the Ricci flow and the Ricci-Bourguignon flow, respectively.

Author in \cite{Az2}  studied the monotonicity  of the  first eigenvalue of Witten-Laplace operator $-\Delta_{\phi}$ along the Ricci-Bourguignon flow with some assumptions and in
 \cite{Az1} investigate  the evolution  for the first eigenvalue of  $p$-Laplacian along the Yamabe flow.

In  \cite{SF} and \cite{FSW}
have been  studied the evolution for the first eigenvalue of geometric operator $-\Delta_{\phi}+\frac{R}{2}$ along the Yamabe flow and  the Ricci flow, respectively.
For the other recent research in this subject, see \cite{LF, CY, JY}.\\

Motivated by the described  above works, in this paper we will study the evolution of  the first  eigenvalue of the weighted $p$-Laplace operator whose metric satisfying   the   Ricci-Bourguignon  flow  (\ref{rb}) and $\phi$ evolves by $\frac{\partial \phi}{\partial t}=\Delta \phi$ that is $(M^{n}, g(t),\phi(t))$ satisfying in following system
\begin{equation}\label{rb0}
\begin{cases}
\frac{d}{dt}g(t)=-2Ric(g(t))+2\rho R(g(t))g(t)=-2(Ric-\rho Rg),& g(0)=g_{0},\\
\frac{\partial \phi}{\partial t}=\Delta \phi&\phi(0)=\phi_{0},
\end{cases}
\end{equation}
where $\Delta$ is Laplace operator of metric $g(t)$.
\section{Preliminaries}
In this  section, we will discuss about the  differentiable of first nonzero eigenvalue  and its corresponding eigenfunction of the weighted $p$-Laplacian $\Delta_{p,\phi}$ along the  flow (\ref{rb0}).  Let $M$ be a closed oriented Riemannian $n$-manifold and  $(M, g(t), \phi(t))$ be a smooth solution of the evolution equations system (\ref{rb0})  for $t\in[0,T)$.\\

  In what follows we assume that  $\lambda(t)$ exists and is $C^{1}$-differentiable under the flow (\ref{rb0}) in the given interval $t\in[0,T)$. The  first nonzero eigenvalue of weighted $p$-Laplacian and its corresponding eigenfunction are not known to be $C^{1}$-differentiable. For this reason, we apply techniques  of Cao \cite{XC1} and Wu \cite{JY}  to study the evolution and monotonicity of $\lambda(t)=\lambda(t, f(t))$, where $\lambda(t, f(t))$ and $f(t)$ are assumed to be smooth. For this end, we assume that at time $t_{0}$, $f_{0}=f(t_{0})$ is the eigenfunction for the first eigenvalue $\lambda(t_{0})$  of   $\Delta_{p,\phi}$. Then we have
\begin{equation}
\int_{M}| f(t_{0})|^{p}\,d\mu_{g(t_{0})}=1.
\end{equation}
Suppose that
\begin{equation}
h(t):=f_{0}\left[\frac{\det(g_{ij}(t_{0}))}{\det(g_{ij}(t))}\right]^{\frac{1}{2(p-1)}},
\end{equation}
along the Ricci-Bourguignon flow $g(t)$. We assume that
\begin{equation}
f(t)=\frac{h(t)}{(\int_{M}|h(t)|^{p}d\mu)^\frac{1}{p}},
\end{equation}
which $f(t)$ is smooth function along the Ricci-Bourguignon flow, satisfied in $\int_{M}|f|^{p}d\mu=1$ and at time $t_{0}$, f is the eigenfunction for $\lambda$ of $\Delta_{p,\phi}$. Therefore if  $\int_{M}|f|^{p}d\mu=1$ and
\begin{equation}
\lambda(t, f(t))=-\int_{M}f \Delta_{p,\phi}fd\mu,
\end{equation}
then $\lambda(t_{0}, f(t_{0}))=\lambda(t_{0})$.
\section{Variation of $\lambda(t)$}
In this section, we will find  some useful evolution formulas for $\lambda(t)$ along the  flow (\ref{rb0}). Now, we  first recall some evolution of geometric structure along the   Ricci-Bourguignon flow and then give a useful proposition  about the variation of eigenvalues of  the weighted $p$-Laplacian under the  flow (\ref{rb0}). From \cite{GC} we have
\begin{lemma}\label{l}
Under the Ricci-Bourguignon flow equation (\ref{rb}), we get
\begin{enumerate}
  \item $\frac{\partial}{\partial t}g^{ij}=2(R^{ij}-\rho R g^{ij})$,\\
  \item$\frac{\partial}{\partial t}(d\nu)=(n\rho-1)Rd\nu$,\\
  \item $\frac{\partial}{\partial t}(d\mu)=(-\phi_{t}+(n\rho-1)R)d\mu$,\\
  \item$ \frac{\partial}{\partial t}(\Gamma_{ij}^{k})=-\nabla_{j}R_{i}^{k}-\nabla_{i}R_{j}^{k}+\nabla^{k}R_{ij}
+\rho(\nabla_{j}R\delta_{i}^{k}+\nabla_{i}R\delta_{j}^{k}-\nabla^{k}Rg_{ij})$,\\
  \item $\frac{\partial}{\partial t}R=[1-2(n-1)\rho]\Delta R+2|Ric|^{2}-2\rho R^{2}$,
\end{enumerate}
where $R$ is scalar curvature.
\end{lemma}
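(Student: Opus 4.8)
The plan is to derive all five identities as direct consequences of the flow equation (\ref{rb}), which I rewrite compactly as $\frac{\partial}{\partial t}g_{ij}=h_{ij}$ with $h_{ij}:=-2(R_{ij}-\rho R g_{ij})$; its trace is $g^{ij}h_{ij}=-2(R-n\rho R)=2(n\rho-1)R$. For identity (1) I differentiate $g^{ik}g_{kj}=\delta^i_j$ in $t$ and solve, obtaining
\[
\frac{\partial}{\partial t}g^{ij}=-g^{ik}g^{jl}\frac{\partial}{\partial t}g_{kl}=-h^{ij}=2(R^{ij}-\rho R g^{ij}).
\]
For (2) I invoke Jacobi's formula $\frac{\partial}{\partial t}\det(g_{ij})=\det(g_{ij})\,g^{ij}\frac{\partial}{\partial t}g_{ij}$, so that $\frac{\partial}{\partial t}(d\nu)=\tfrac12 g^{ij}h_{ij}\,d\nu=(n\rho-1)R\,d\nu$. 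Identity (3) then follows from the product rule applied to $d\mu=e^{-\phi}d\nu$, giving $\frac{\partial}{\partial t}(d\mu)=(-\phi_t+(n\rho-1)R)\,d\mu$.

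For (4) I substitute $h_{ij}$ into the standard variation formula for the Levi-Civita connection,
\[
\frac{\partial}{\partial t}\Gamma^k_{ij}=\tfrac12 g^{kl}\left(\nabla_i h_{jl}+\nabla_j h_{il}-\nabla_l h_{ij}\right),
\]
and use metric compatibility $\nabla_l g_{ij}=0$ to move the covariant derivatives past the factor $\rho R g_{ij}$. Raising one index with $g^{kl}$ and collecting the Ricci terms separately from the $\rho$-terms reproduces the claimed expression, with the $\rho$-contribution assembling into $\rho(\nabla_j R\,\delta^k_i+\nabla_i R\,\delta^k_j-\nabla^k R\,g_{ij})$.

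The real work is in (5). I would start from the linearization of scalar curvature under a general variation $\frac{\partial}{\partial t}g_{ij}=h_{ij}$, namely
\[
\frac{\partial}{\partial t}R=-\Delta(g^{ij}h_{ij})+\nabla^i\nabla^j h_{ij}-\langle h,Ric\rangle,
\]
and substitute $h_{ij}=-2R_{ij}+2\rho R g_{ij}$. The first term gives $-2(n\rho-1)\Delta R$; for the double divergence I use the contracted second Bianchi identity $\nabla^j R_{ij}=\tfrac12\nabla_i R$ to reduce $\nabla^i\nabla^j h_{ij}$ to $(2\rho-1)\Delta R$; and the pairing yields $\langle h,Ric\rangle=-2|Ric|^2+2\rho R^2$. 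Combining the three contributions collects the coefficient of $\Delta R$ into $-2(n\rho-1)+(2\rho-1)=1-2(n-1)\rho$, leaving $2|Ric|^2-2\rho R^2$, as stated. The main obstacle is precisely this bookkeeping in (5): the Bianchi identity is what turns the double divergence of the Ricci part into a multiple of $\Delta R$, and the several $\Delta R$ coefficients must be tracked carefully so that they assemble into exactly $1-2(n-1)\rho$ rather than a nearby combination.
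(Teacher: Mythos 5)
Your proof is correct. The paper gives no argument of its own for this lemma (it is quoted directly from the reference \cite{GC}), and your derivation via the standard variational formulas --- Jacobi's formula for $\det(g_{ij})$, the Christoffel variation $\frac{\partial}{\partial t}\Gamma^{k}_{ij}=\tfrac12 g^{kl}(\nabla_i h_{jl}+\nabla_j h_{il}-\nabla_l h_{ij})$, and the linearization $\frac{\partial}{\partial t}R=-\Delta(\operatorname{tr}_g h)+\nabla^i\nabla^j h_{ij}-\langle h,\mathrm{Ric}\rangle$ combined with the contracted Bianchi identity --- is precisely the standard route, and the coefficient bookkeeping, in particular $-2(n\rho-1)+(2\rho-1)=1-2(n-1)\rho$, checks out.
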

\begin{lemma}\label{l1}
Let $(M,g(t),\phi(t)),\,\,\,t\in[0,T)$ be a solution to the  flow (\ref{rb0}) on a closed oriented Riemannain manifold  for $\rho<\frac{1}{2(n-1)}$. Let $f\in C^{\infty}(M)$ be a smooth function on $(M,g(t))$. Then we have  the following evolutions:
\begin{eqnarray}
  \frac{\partial}{\partial t}|\nabla f|^{2}&=&2R^{ij}\nabla_{i}f\nabla_{j}f-2\rho R |\nabla f|^{2}+2g^{ij}\nabla_{i}f\nabla_{j}f_{t},\label{el1}\\
\frac{\partial}{\partial t}|\nabla f|^{p-2}&=&(p-2)|\nabla f|^{p-4}\{R^{ij}\nabla_{i}f\nabla_{j}f-\rho R |\nabla f|^{2}+g^{ij}\nabla_{i}f\nabla_{j}f_{t} \},\label{el2}\\
\frac{\partial}{\partial t}(\Delta f)&=&2R^{ij}\nabla_{i}\nabla_{j}f+\Delta f_{t}-2\rho R\Delta f-(2-n)\rho \nabla^{k}R\nabla_{k}f,\label{el3}\\
\frac{\partial}{\partial t}(\Delta_{p} f)&=&2R^{ij}\nabla_{i}(Z\nabla_{j}f)-2\rho R\Delta_{p}f+g^{ij}\nabla_{i}(Z_{t}\nabla_{j}f)\label{el4}\\\nonumber
&&+g^{ij}\nabla_{i}(Z\nabla_{j}f_{t})+\rho(n-2) Zg^{ij}\nabla_{i}R\nabla_{j}f,\\
\frac{\partial}{\partial t}(\Delta_{p,\phi} f)&=&2R^{ij}\nabla_{i}(Z\nabla_{j}f)+g^{ij}\nabla_{i}(Z_{t}\nabla_{j}f)+g^{ij}\nabla_{i}(Z\nabla_{j}f_{t})\label{el5}\\\nonumber
&&-2\rho R\Delta_{p,\phi} f+\rho(n-2)Zg^{ij}\nabla_{i}R\nabla_{j}f -Z_{t}\nabla\phi.\nabla f\\\nonumber
&& -2ZR^{ij}\nabla_{i}\phi\nabla_{j}f-Z\nabla\phi_{t}.\nabla f-Z\nabla\phi.\nabla f_{t},
\end{eqnarray}
where $Z:=|\nabla f|^{p-2}$ and $f_{t}=\frac{\partial f}{\partial t}$.
\end{lemma}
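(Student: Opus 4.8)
The plan is to establish the five identities in the order listed, each building on the previous one, with Lemma \ref{l} supplying every background evolution I need. The common mechanism throughout is the product rule in $t$ combined with the elementary observation that for a function the covariant and partial derivatives coincide, $\nabla_i f = \partial_i f$, so that $\partial_t(\nabla_i f) = \nabla_i f_t$ carries no Christoffel correction. This is what lets the time derivative pass cleanly through gradients.

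First, for (\ref{el1}) I would write $|\nabla f|^2 = g^{ij}\nabla_i f\,\nabla_j f$ and differentiate in $t$, feeding in $\partial_t g^{ij} = 2(R^{ij}-\rho R g^{ij})$ from Lemma \ref{l}(1) and using the remark above on the two gradient factors; the two mixed terms combine into $2g^{ij}\nabla_i f\,\nabla_j f_t$. Identity (\ref{el2}) is then immediate by the chain rule applied to $|\nabla f|^{p-2} = (|\nabla f|^2)^{(p-2)/2}$, substituting (\ref{el1}) and pulling out the factor $\tfrac{p-2}{2}|\nabla f|^{p-4}$.

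The first genuine computation is (\ref{el3}). Writing $\Delta f = g^{ij}\nabla_i\nabla_j f$ and differentiating, the term $\partial_t(\nabla_i\nabla_j f)$ produces $\nabla_i\nabla_j f_t - (\partial_t\Gamma_{ij}^k)\nabla_k f$. The crux is to contract Lemma \ref{l}(4) against $g^{ij}$: since $g^{ij}$ is parallel, the $\nabla^k R_{ij}$ term gives $\nabla^k R$, while the two $\nabla R^{\,k}_{\,\cdot}$ terms are handled by the contracted second Bianchi identity $\nabla^i R_{ij} = \tfrac12\nabla_j R$; the three non-$\rho$ contributions then cancel and one is left with $g^{ij}\partial_t\Gamma_{ij}^k = \rho(2-n)\nabla^k R$. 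I expect this contraction to be the main obstacle, because every later formula reuses it, so the Bianchi bookkeeping and the sign conventions must be pinned down once and for all here. Adding the contribution $\partial_t g^{ij}\cdot\nabla_i\nabla_j f = 2R^{ij}\nabla_i\nabla_j f - 2\rho R\,\Delta f$ yields (\ref{el3}).

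For (\ref{el4}) I set $Z = |\nabla f|^{p-2}$ and differentiate $\Delta_p f = g^{ij}\nabla_i(Z\nabla_j f)$. The factor $\partial_t[\nabla_i(Z\nabla_j f)]$ splits, by the same reasoning as above, into $\nabla_i(Z_t\nabla_j f) + \nabla_i(Z\nabla_j f_t) - (\partial_t\Gamma_{ij}^k)\,Z\nabla_k f$; contracting with $g^{ij}$ and inserting the value $g^{ij}\partial_t\Gamma_{ij}^k = \rho(2-n)\nabla^k R$ from the previous step produces the $\rho(n-2)Z g^{ij}\nabla_i R\,\nabla_j f$ term, while $\partial_t g^{ij}$ contributes $2R^{ij}\nabla_i(Z\nabla_j f) - 2\rho R\,\Delta_p f$. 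Finally, (\ref{el5}) follows by writing $\Delta_{p,\phi}f = \Delta_p f - Z\,\nabla\phi\cdot\nabla f$, differentiating the correction term by the product rule (this is where the $Z_t\,\nabla\phi\cdot\nabla f$, $Z\,\nabla\phi_t\cdot\nabla f$, $Z\,\nabla\phi\cdot\nabla f_t$, and $2Z R^{ij}\nabla_i\phi\,\nabla_j f$ terms enter, the last coming from $\partial_t g^{ij}$ acting on the contracted inner product), and absorbing the two scalar-curvature pieces through $-2\rho R\,\Delta_p f + 2\rho R\,Z\,\nabla\phi\cdot\nabla f = -2\rho R\,\Delta_{p,\phi}f$, which gives the stated form.
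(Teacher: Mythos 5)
Your proposal is correct and follows essentially the same route as the paper: product rule in $t$ together with Lemma \ref{l}, the key point being the contraction $g^{ij}\partial_t\Gamma_{ij}^k=\rho(2-n)\nabla^k R$ via the contracted Bianchi identity, then the divergence-form computation for $\Delta_p f$ and the product-rule correction for the drift term $Z\,\nabla\phi\cdot\nabla f$, absorbing the scalar-curvature pieces into $-2\rho R\,\Delta_{p,\phi}f$. The only (immaterial) difference is that you obtain (\ref{el4}) by differentiating $g^{ij}\nabla_i(Z\nabla_j f)$ directly, whereas the paper expands it and reuses (\ref{el3}); both rest on the same identities and give the same result.
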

\begin{proof}
By direct computation in local coordinates we have
\begin{eqnarray*}
\frac{\partial}{\partial t}|\nabla f|^{2}&=&\frac{\partial}{\partial t}(g^{ij}\nabla_{i}f\nabla_{j}f)\\
&=&\frac{\partial g^{ij}}{\partial t}\nabla_{i}f\nabla_{j}f+2g^{ij}\nabla_{i}f\nabla_{j}f_{t}
\\&=&2R^{ij}\nabla_{i}f\nabla_{j}f-2\rho R |\nabla f|^{2}+2g^{ij}\nabla_{i}f\nabla_{j}f_{t},
\end{eqnarray*}
which exactly (\ref{el1}). We prove (\ref{el2}) by using (\ref{el1}) as follows
\begin{eqnarray*}
\frac{\partial}{\partial t}|\nabla f|^{p-2}&=&\frac{\partial}{\partial t}(|\nabla f|^{2})^{\frac{p-2}{2}}\\
&=&\frac{p-2}{2}(|\nabla f|^{2})^{\frac{p-4}{2}}\frac{\partial}{\partial t}(|\nabla f|^{2})
\\&=&\frac{p-2}{2}|\nabla f|^{p-4}\left\{2R^{ij}\nabla_{i}f\nabla_{j}f-2\rho R |\nabla f|^{2}+2g^{ij}\nabla_{i}f\nabla_{j}f_{t}\right\}\\
&=&(p-2)|\nabla f|^{p-4}\left\{R^{ij}\nabla_{i}f\nabla_{j}f-\rho R |\nabla f|^{2}+g^{ij}\nabla_{i}f\nabla_{j}f_{t}\right\},
\end{eqnarray*}
which is (\ref{el2}). Now previous Lemma and $2\nabla^{i}R_{ij}=\nabla_{j}R$ result that
\begin{eqnarray*}
\frac{\partial}{\partial t}(\Delta f)&=&\frac{\partial}{\partial t}[g^{ij}(\frac{\partial^{2}f}{\partial x^{i}\partial x^{j}}-\Gamma_{ij}^{k}\frac{\partial f}{\partial x^{k}})]\\
&=&\frac{\partial g^{ij}}{\partial t}(\frac{\partial^{2}f}{\partial x^{i}\partial x^{j}}-\Gamma_{ij}^{k}\frac{\partial f}{\partial x^{k}})
+g^{ij}(\frac{\partial^{2}f_{t}}{\partial x^{i}\partial x^{j}}-\Gamma_{ij}^{k}\frac{\partial f_{t}}{\partial x^{k}})
-g^{ij}\frac{\partial}{\partial t}(\Gamma_{ij}^{k})\frac{\partial f}{\partial x^{k}}\\
&=&2R^{ij}\nabla_{i}\nabla_{j}f-2\rho R \Delta f+\Delta f_{t}-g^{ij}\left\{ -\nabla_{j}R_{i}^{k}-\nabla_{i}R_{j}^{k}+\nabla^{k}R_{ij}\right\}\nabla_{k}f\\
&&-g^{ij}\rho(\nabla_{j}R\delta_{i}^{k}+\nabla_{i}R\delta_{j}^{k}-\nabla^{k}Rg_{ij})\nabla_{k}f\\
&=&2R^{ij}\nabla_{i}\nabla_{j}f+\Delta f_{t}-2\rho R\Delta f-(2-n)\rho \nabla^{k}R\nabla_{k}f.
\end{eqnarray*}
Let $Z=|\nabla f|^{p-2}$ we get
\begin{eqnarray*}
\frac{\partial}{\partial t}(\Delta_{p}f)&=&\frac{\partial}{\partial t}\big(div (|\nabla f|^{p-2}\nabla f)\big)=
\frac{\partial}{\partial t}\big(g^{ij}\nabla_{i} (Z\nabla_{j} f)\big)\\
&=&\frac{\partial}{\partial t}\big(g^{ij}\nabla_{i}Z\nabla_{j} f+g^{ij}Z\nabla_{i}\nabla_{j} f\big)\\
&=&\frac{\partial g^{ij}}{\partial t}\nabla_{i}Z\nabla_{j} f+g^{ij}\nabla_{i}Z_{t}\nabla_{j} f+g^{ij}\nabla_{i}Z\nabla_{j} f_{t}+Z_{t}\Delta f+Z\frac{\partial}{\partial t}(\Delta f)\\
&=&2R^{ij}\nabla_{i}Z\nabla_{j} f-2\rho Rg^{ij}\nabla_{i}Z\nabla_{j} f+g^{ij}\nabla_{i}Z_{t}\nabla_{j} f+g^{ij}\nabla_{i}Z\nabla_{j} f_{t}+Z_{t}\Delta f\\&&+Z\{2R^{ij}\nabla_{i}\nabla_{j}f+\Delta f_{t}-2\rho R\Delta f-(2-n)\rho \nabla^{k}R\nabla_{k}f\}\\
&=&2R^{ij}\nabla_{i}(Z\nabla_{j}f)-2\rho R\Delta_{p}f+g^{ij}\nabla_{i}(Z_{t}\nabla_{j}f)\\
&&+g^{ij}\nabla_{i}(Z\nabla_{j}f_{t})+\rho(n-2) Zg^{ij}\nabla_{i}R\nabla_{j}f.
\end{eqnarray*}
We have $\Delta_{p,\phi}f=\Delta_{p}f-|\nabla f|^{p-2}\nabla\phi.\nabla f$. Taking derivative with respect to time of both sides of last equation and  (\ref{el4}) imply that
\begin{eqnarray*}
\frac{\partial}{\partial t}(\Delta_{p,\phi} f)&=&\frac{\partial}{\partial t}(\Delta_{p} f)-Z\frac{\partial g^{ij}}{\partial t}\nabla_{i}\phi\nabla_{j}f-Z_{t}g^{ij}\nabla_{i}\phi\nabla_{j}f-Zg^{ij}\nabla_{i}\phi_{t}\nabla_{j}f\\&&-Zg^{ij}\nabla_{i}\phi\nabla_{j}f_{t}\\
&=&2R^{ij}\nabla_{i}(Z\nabla_{j}f)-2\rho R\Delta_{p}f+g^{ij}\nabla_{i}(Z_{t}\nabla_{j}f)+g^{ij}\nabla_{i}(Z\nabla_{j}f_{t})\\
&&+\rho(n-2) Zg^{ij}\nabla_{i}R\nabla_{j}f
-2ZR^{ij}\nabla_{i}\phi\nabla_{j}f+2\rho ZRg^{ij}\nabla_{i}\phi\nabla_{j}f\\&&-Z_{t}g^{ij}\nabla_{i}\phi\nabla_{j}f-Zg^{ij}\nabla_{i}\phi_{t}\nabla_{j}f-Zg^{ij}\nabla_{i}\phi\nabla_{j}f_{t},
\end{eqnarray*}
it results (\ref{el5}).
\end{proof}
\begin{proposition}\label{p1}
Let $(M,g(t),\phi(t)),\,\,\,t\in[0,T)$ be a solution of  the  flow (\ref{rb0}) on the smooth  closed oriented Riemannain manifold $(M^{n}, g_{0},\phi_{0})$  for $\rho<\frac{1}{2(n-1)}$. If $\lambda(t)$ denotes the evolution the first non-zero eigenvalue of the weighted $p$-Laplacian $\Delta_{p,\phi}$ corresponding to the eigenfunction $f(t)$  under the  flow (\ref{rb0}), then
\begin{eqnarray}\nonumber
\frac{\partial }{\partial t}\lambda(t,f(t))|_{t=t_{0}}&=&\lambda(t_{0})(1-n\rho)\int_{M} R|f|^{p}\,d\mu-(1+\rho p-\rho n)\int_{M}R|\nabla f|^{p}d\mu
\\\label{e2}
&&+p\int_{M}Z R^{ij}\nabla_{i} f\nabla_{j} f\,d\mu
+\lambda(t_{0})\int_{M}(\Delta \phi) |f|^{p}\,d\mu\\\nonumber&&-\int_{M}(\Delta \phi)|\nabla f|^{p}d\mu.
\end{eqnarray}
\end{proposition}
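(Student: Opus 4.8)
The plan is to differentiate the eigenvalue functional directly and exploit the fact that at the single time $t=t_0$ the normalized function $f$ is a genuine eigenfunction. Since $f(t)$ is built so that $\int_M |f|^p\,d\mu=1$ for all $t$, I would start from the representation $\lambda(t,f(t))=\int_M |\nabla f|^p\,d\mu$ (equivalently $-\int_M f\,\Delta_{p,\phi}f\,d\mu$) and compute $\frac{\partial}{\partial t}\lambda(t,f(t))$. Writing $|\nabla f|^p=Z|\nabla f|^2$ with $Z=|\nabla f|^{p-2}$, the $t$-derivative splits into the contribution of $\frac{\partial}{\partial t}|\nabla f|^p$ and that of $\frac{\partial}{\partial t}(d\mu)$.

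For the first piece I would invoke Lemma \ref{l1}: from (\ref{el1}) one obtains $\frac{\partial}{\partial t}|\nabla f|^p=\frac{p}{2}Z\frac{\partial}{\partial t}|\nabla f|^2=pZR^{ij}\nabla_i f\nabla_j f-p\rho R|\nabla f|^p+pZg^{ij}\nabla_i f\nabla_j f_t$, using $Z|\nabla f|^2=|\nabla f|^p$. For the measure I would use Lemma \ref{l}(3) together with $\phi_t=\Delta\phi$, giving $\frac{\partial}{\partial t}(d\mu)=(-\Delta\phi+(n\rho-1)R)\,d\mu$. Integrating over $M$ already produces the Ricci term $p\int_M ZR^{ij}\nabla_i f\nabla_j f\,d\mu$, the scalar-curvature contributions $-p\rho\int_M R|\nabla f|^p\,d\mu$ and $(n\rho-1)\int_M R|\nabla f|^p\,d\mu$ (which combine into the coefficient $-(1+\rho p-\rho n)$), and the term $-\int_M(\Delta\phi)|\nabla f|^p\,d\mu$ of (\ref{e2}).

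The hard part will be the remaining term $p\int_M Zg^{ij}\nabla_i f\nabla_j f_t\,d\mu$, which involves the uncontrolled derivative $f_t=\partial f/\partial t$. I would eliminate it by integrating by parts against the weighted measure, using $\int_M |\nabla f|^{p-2}\langle\nabla f,\nabla\psi\rangle\,d\mu=-\int_M \psi\,\Delta_{p,\phi}f\,d\mu$ with $\psi=f_t$, which turns it into $-p\int_M f_t\,\Delta_{p,\phi}f\,d\mu$. Evaluating at $t=t_0$, where the eigenvalue equation $\Delta_{p,\phi}f=-\lambda(t_0)|f|^{p-2}f$ holds, gives $p\lambda(t_0)\int_M f_t|f|^{p-2}f\,d\mu$. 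To dispose of this last $f_t$-integral I would differentiate the normalization $\int_M|f|^p\,d\mu=1$, obtaining $p\int_M |f|^{p-2}f f_t\,d\mu=-\int_M |f|^p\frac{\partial}{\partial t}(d\mu)=\int_M(\Delta\phi)|f|^p\,d\mu-(n\rho-1)\int_M R|f|^p\,d\mu$ by Lemma \ref{l}(3) again. Substituting this back supplies exactly the terms $\lambda(t_0)\int_M(\Delta\phi)|f|^p\,d\mu$ and $\lambda(t_0)(1-n\rho)\int_M R|f|^p\,d\mu$, and collecting all the contributions yields (\ref{e2}).

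The subtlety worth flagging is that every simplification that uses the eigenvalue equation is applied only \emph{after} differentiation and evaluation at $t_0$, since away from $t_0$ one has only the smooth family $f(t)$ subject to the normalization, not an eigenfunction. As an alternative route one could instead differentiate $-\int_M f\,\Delta_{p,\phi}f\,d\mu$ and feed in the full evolution (\ref{el5}) for $\frac{\partial}{\partial t}(\Delta_{p,\phi}f)$; the self-adjointness of $\Delta_{p,\phi}$ with respect to $d\mu$ makes that computation collapse to the same identity after the same two reductions, which is a useful consistency check.
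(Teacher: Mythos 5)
Your proposal is correct, and it reaches (\ref{e2}) by a genuinely different and more economical route than the paper. You differentiate the Dirichlet-energy representation $\lambda(t,f(t))=\int_M|\nabla f|^{p}\,d\mu$, so the only inputs are the evolution (\ref{el1}) of $|\nabla f|^{2}$ (hence of $|\nabla f|^{p}$), the measure evolution of Lemma \ref{l}(3) with $\phi_t=\Delta\phi$, one weighted integration by parts to convert $p\int_M Z\langle\nabla f,\nabla f_t\rangle\,d\mu$ into $-p\int_M f_t\,\Delta_{p,\phi}f\,d\mu$, and the differentiated normalization $p\int_M|f|^{p-2}ff_t\,d\mu=-\int_M|f|^{p}\,\partial_t(d\mu)$; I checked the bookkeeping and the coefficients $-(1+\rho p-\rho n)$, $\lambda(t_0)(1-n\rho)$ and the two $\Delta\phi$ terms come out exactly as in (\ref{e2}). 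The paper instead differentiates $-\int_M f\,\Delta_{p,\phi}f\,d\mu$ and feeds in the full operator evolution (\ref{el5}), which forces the contracted Bianchi identity $2\nabla^iR_{ij}=\nabla_jR$ in (\ref{e9}), the substitution of $Z_t$ from (\ref{e14}), and the further integrations by parts (\ref{e6})--(\ref{e8}), (\ref{e16}), (\ref{e19}), (\ref{e191}) to remove the $\nabla R$, $\nabla\phi_t$ and $f_t$ terms; your route bypasses all of that because the $\phi$- and curvature-dependence enters only through $|\nabla f|^{2}$ and $d\mu$. What the paper's longer path buys is the pointwise evolution formulas for $\Delta_p$ and $\Delta_{p,\phi}$ (Lemma \ref{l1}), which have independent interest, but for the first-variation formula itself your argument is shorter and less error-prone. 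Your flagged subtlety is also the right one: the eigenvalue equation and the identification $\lambda(t_0,f(t_0))=\lambda(t_0)$ are used only after evaluating at $t=t_0$, exactly as in the paper's treatment via (\ref{e5}).
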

\begin{proof}
Let $f(t)$ be a smooth function where $f(t_{0})$ is the corresponding eigenfunction to $\lambda(t_{0})=\lambda(t_{0},f(t_{0}))$. $\lambda(t,f(t))$ is a smooth function and taking derivative of both sides  $\lambda(t,f(t))=-\int_{M}f\Delta_{p,\phi}f\,d\mu$ with respect to time, we get
\begin{equation}\label{e3}
\frac{\partial }{\partial t}\lambda(t,f(t))|_{t=t_{0}}=-\frac{\partial }{\partial t}\int_{M}f\Delta_{p,\phi}f\,d\mu.
\end{equation}
Now by applying  condition $\int_{M}|f|^{p}d\mu=1$ and the time derivative, we can have
\begin{equation}\label{e4}
\frac{\partial }{\partial t}\int_{M}|f|^{p}d\mu=0=\frac{\partial }{\partial t}\int_{M}|f|^{p-2}f^{2} d\mu=\int_{M}(p-1)|f|^{p-2}f f_{t}d\mu+\int_{M}|f|^{p-2}f\frac{\partial }{\partial t}(f d\mu),
\end{equation}
hence
\begin{equation}\label{e5}
\int_{M}|f|^{p-2}f\left[ (p-1) f_{t}d\mu+\frac{\partial }{\partial t}(f d\mu)\right]=0.
\end{equation}
On the other hand, using (\ref{el5}), we obtain
\begin{eqnarray}\nonumber
\frac{\partial }{\partial t}\int_{M}f\Delta_{p,\phi}f\,d\mu&=&\int_{M}\frac{\partial }{\partial t}(\Delta_{p,\phi}f)f\,d\mu+\int_{M}\Delta_{p,\phi}f\,\frac{\partial }{\partial t}(f\,d\mu)\\\nonumber
&=&2\int_{M}R^{ij}\nabla_{i}(Z\nabla_{j}f)f\,d\mu-2\rho \int_{M}R\Delta_{p,\phi}f f\,d\mu\\\label{eq2}
&&+\int_{M}g^{ij}\nabla_{i}(Z_{t}\nabla_{j}f)f\,d\mu
+\int_{M}g^{ij}\nabla_{i}(Z\nabla_{j}f_{t})f\,d\mu\\\nonumber&&+\rho(n-2) \int_{M}Z\nabla R. \nabla f f\,d\mu-\int_{M}Z_{t}\nabla\phi. \nabla f f\,d\mu\\\nonumber&&-\int_{M}Z\nabla\phi_{t}. \nabla ff\,d\mu
-\int_{M}Z\nabla\phi. \nabla f_{t}f\,d\mu\\\nonumber&&-2\int_{M}R^{ij}Z\nabla_{i}\phi \nabla_{j}ff\,d\mu
-\int_{M}\lambda|f|^{p-2}f\frac{\partial }{\partial t}(f d\mu).
\end{eqnarray}
By the application of integration by parts we can conclude that
\begin{equation}\label{e6}
\int_{M}g^{ij}\nabla_{i}(Z_{t}\nabla_{j}f)f\,d\mu=-\int_{M}Z_{t}|\nabla f|^{2}d\mu+\int_{M}Z_{t}\nabla f.\nabla\phi f\,d\mu,
\end{equation}
similarly integration by parts implies that
\begin{equation}\label{e7}
\int_{M}g^{ij}\nabla_{i}(Z\nabla_{j}f_{t})f\,d\mu=-\int_{M}Z\nabla f_{t}.\nabla f\,d\mu+\int_{M}Z\nabla f_{t}.\nabla\phi f\,d\mu,
\end{equation}
and
\begin{eqnarray}\nonumber
\int_{M}R^{ij}\nabla_{i}(Z\nabla_{j}f)f\,d\mu&=&-\int_{M}ZR^{ij}\nabla_{i} f\nabla_{j} f\,d\mu+\int_{M}ZR^{ij}\nabla_{j} f\nabla_{i}\phi f\,d\mu\\\label{e8}&&
-\int_{M}Z\nabla_{i}R^{ij}\nabla_{j}f f\,d\mu.
\end{eqnarray}
But we can write
\begin{eqnarray}\nonumber
2\int_{M}Z\nabla_{i}R^{ij}\nabla_{j}f f\,d\mu&=&2\int_{M}Zg^{ik}g^{jl}\nabla_{j}f\nabla_{i}R_{kl}f\,d\mu=\int_{M}Zg^{jl}\nabla_{j}f\nabla_{l}R f\,d\mu\\\label{e9}
&=&-\int_{M}R\Delta_{p,\phi}f\,f\,d\mu-\int_{M}R|\nabla f|^{p}d\mu.
\end{eqnarray}
Putting  (\ref{e9}) in (\ref{e8}), yields
\begin{eqnarray}\nonumber
2\int_{M}R^{ij}\nabla_{i}(Z\nabla_{j}f) f\,d\mu&=&-2\int_{M}ZR^{ij}\nabla_{i} f\nabla_{j} f\,d\mu+2\int_{M}ZR^{ij}\nabla_{j} f\nabla_{i}\phi f\,d\mu\\\label{e12}
&&-\int_{M}\lambda R|f|^{p}\,d\mu+\int_{M}R|\nabla f|^{p}d\mu.
\end{eqnarray}
Now, replacing (\ref{e6}), (\ref{e7}) and (\ref{e12}) in (\ref{eq2}), we obtain
\begin{eqnarray}\nonumber
\frac{\partial }{\partial t}\int_{M}f\Delta_{p,\phi}f\,d\mu&=&-2\int_{M}ZR^{ij}\nabla_{i} f\nabla_{j} f\,d\mu
-\int_{M}\lambda R|f|^{p}\,d\mu+\int_{M}R|\nabla f|^{p}d\mu\\\nonumber
&&+2\rho \int_{M}\lambda R|f|^{p}d\mu+\rho (n-2)\int_{M}Z\nabla R.\nabla f fd\mu\\\label{e13}
&&-\int_{M}Z_{t}|\nabla f|^{2}d\mu-\int_{M}Z\nabla f_{t}.\nabla f\,d\mu-\int_{M}Z\nabla \phi_{t}.\nabla f\,f\,d\mu\\\nonumber
&&-\int_{M}\lambda|f|^{p-2}f\frac{\partial }{\partial t}(f d\mu).
\end{eqnarray}
On the other hand of Lemma \ref{l1} we have
\begin{equation}\label{e14}
Z_{t}=\frac{\partial }{\partial t}(|\nabla f|^{p-2})=(p-2)|\nabla f|^{p-4}\{R^{ij}\nabla_{i}f\nabla_{j}f-\rho R |\nabla f|^{2}+g^{ij}\nabla_{i}f\nabla_{j}f_{t} \}.
\end{equation}
Therefore putting this into (\ref{e13}), we get
\begin{eqnarray}\nonumber
-\frac{\partial }{\partial t}\lambda(t,f(t))|_{t=t_{0}}&=&-p\int_{M}ZR^{ij}\nabla_{i} f\nabla_{j} f\,d\mu
+\lambda(t_{0})(2\rho-1)\int_{M} R|f|^{p}\,d\mu\\\nonumber
&&+(1+\rho p-2\rho)\int_{M}R|\nabla f|^{p}d\mu+\rho (n-2)\int_{M}Z\nabla R.\nabla f fd\mu
\\\nonumber
&&
-(p-1)\int_{M}Z\nabla f_{t}.\nabla f\,d\mu-\int_{M}Z\nabla \phi_{t}.\nabla f\,f\,d\mu\\\label{e15}
&&-\lambda(t_{0})\int_{M}|f|^{p-2}f\frac{\partial }{\partial t}(f d\mu).
\end{eqnarray}
Also
\begin{eqnarray}\nonumber
-(p-1)\int_{M}Z\nabla f_{t}.\nabla f\,d\mu&=&(p-1)\int_{M}\nabla(Z\nabla f) f_{t}\,d\mu-(p-1)\int_{M}Z\nabla f.\nabla\phi f_{t}\,d\mu\\\label{e16}
&=&(p-1)\int_{M}f_{t}\Delta_{p,\phi}f\,d\mu=- (p-1)\int_{M}\lambda|f|^{p-2}f\,f_{t}\,d\mu.
\end{eqnarray}
Then we arrive at
\begin{eqnarray}\nonumber
-\frac{\partial }{\partial t}\lambda(t,f(t))|_{t=t_{0}}&=&-p\int_{M}ZR^{ij}\nabla_{i} f\nabla_{j} f\,d\mu
+\lambda(t_{0})(2\rho-1)\int_{M} R|f|^{p}\,d\mu\\\nonumber
&&+(1+\rho p-2\rho )\int_{M}R|\nabla f|^{p}d\mu+\rho (n-2)\int_{M}Z\nabla R.\nabla f fd\mu\\\label{e17}
&&-\int_{M}Z\nabla \phi_{t}.\nabla f\,fd\mu\\\nonumber
&&-\lambda(t_{0})\int_{M}|f|^{p-2}f\left((p-1)f_{t}\,d\mu+\frac{\partial }{\partial t}(f d\mu)\right).
\end{eqnarray}
Hence (\ref{e5}) results that
\begin{eqnarray}\nonumber
-\frac{\partial }{\partial t}\lambda(t,f(t))|_{t=t_{0}}&=&-p\int_{M}ZR^{ij}\nabla_{i} f\nabla_{j} f\,d\mu
+\lambda(t_{0})(2\rho-1)\int_{M} R|f|^{p}\,d\mu\\\label{e18}
&&+(1+\rho p-2\rho )\int_{M}R|\nabla f|^{p}d\mu+\rho (n-2)\int_{M}Z\nabla R.\nabla f fd\mu\\\nonumber
&&-\int_{M}Z\nabla \phi_{t}.\nabla f\,fd\mu.
\end{eqnarray}
By integration by parts we get
\begin{equation}\label{e19}
\int_{M}Z\nabla \phi_{t}.\nabla f\,f\,d\mu=\int_{M}\lambda |f|^{p}(\Delta \phi)\,d\mu-\int_{M}(\Delta \phi)|\nabla f|^{p}d\mu
\end{equation}
and
\begin{equation}\label{e191}
\int_{M}Z\nabla R.\nabla f\,f\,d\mu=\int_{M}\lambda R|f|^{p}\,d\mu-\int_{M}R|\nabla f|^{p}d\mu.
\end{equation}
Plugin (\ref{e19}) and (\ref{e191}) into (\ref{e18})  imply  that (\ref{e2}).
\end{proof}
\begin{corollary}\label{c3}
Let $(M,g(t)),\,\,\,t\in[0,T)$ be a solution of  the Ricci-Bourguignon flow (\ref{rb}) on the smooth  closed oriented Riemannain manifold $(M^{n}, g_{0})$ for $\rho<\frac{1}{2(n-1)}$. If $\lambda(t)$ denotes the evolution the first non-zero eigenvalue of the weighted $p$-Laplacian $\Delta_{p,\phi}$ corresponding to the eigenfunction $f(x,t)$  under the Ricci-Bourguignon flow where $\phi$ is independent of $t$, then
\begin{eqnarray}\nonumber
\frac{\partial }{\partial t}\lambda(t,f(t))|_{t=t_{0}}&=&\lambda(t_{0})(1-n\rho)\int_{M} R|f|^{p}\,d\mu-(1+\rho p-\rho n)\int_{M} R|\nabla f|^{p}d\mu
\\\label{e21}&&+p\int_{M}ZR^{ij}\nabla_{i} f\nabla_{j} f\,d\mu.
\end{eqnarray}
\end{corollary}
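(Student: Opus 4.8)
The plan is to obtain Corollary~\ref{c3} as a direct specialization of Proposition~\ref{p1} by tracking precisely where the weight evolution enters. The crucial observation is that throughout the proof of Proposition~\ref{p1} the time derivative $\phi_{t}$ is kept symbolic until the very last substitution: the two terms involving $\Delta\phi$ in formula (\ref{e2}) arise \emph{solely} from the relation $\phi_{t}=\Delta\phi$ imposed by the second equation of the system (\ref{rb0}). In the present corollary the metric evolves by the bare Ricci-Bourguignon flow (\ref{rb}) with $\phi$ held fixed, so $\phi_{t}=0$, and the anticipated outcome is exactly (\ref{e2}) with those two $\Delta\phi$ integrals deleted.

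First I would re-run the computation of Proposition~\ref{p1} verbatim up to equation (\ref{e18}). This is legitimate because Lemma~\ref{l1} and every integration-by-parts step leading to (\ref{e18}) treat $\phi_{t}$ and $f_{t}$ as arbitrary smooth quantities; none of them uses the specific evolution $\phi_{t}=\Delta\phi$. In particular the normalization identity (\ref{e5}), which (together with (\ref{e16})) cancels all terms carrying $\frac{\partial}{\partial t}(f\,d\mu)$ and $f_{t}$ in passing from (\ref{e17}) to (\ref{e18}), follows only from $\int_{M}|f|^{p}\,d\mu=1$ and is therefore insensitive to how $\phi$ is evolved.

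Next, in (\ref{e18}) I would set $\phi_{t}=0$, which annihilates the term $-\int_{M}Z\nabla\phi_{t}.\nabla f\,f\,d\mu$ outright, so that the identity (\ref{e19}) is never invoked. It then remains only to dispose of the curvature-gradient term $\rho(n-2)\int_{M}Z\nabla R.\nabla f\,f\,d\mu$ by the same identity (\ref{e191}) already established, and to collect coefficients. After substituting (\ref{e191}) the coefficient of $\lambda(t_{0})\int_{M}R|f|^{p}\,d\mu$ becomes $(2\rho-1)+\rho(n-2)=-(1-n\rho)$, while the coefficient of $\int_{M}R|\nabla f|^{p}\,d\mu$ becomes $(1+\rho p-2\rho)-\rho(n-2)=1+\rho p-\rho n$; negating both sides to pass from $-\frac{\partial}{\partial t}\lambda$ to $\frac{\partial}{\partial t}\lambda$ then yields precisely (\ref{e21}).

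There is essentially no analytic obstacle here, since the entire content is bookkeeping built on the already-proven Proposition~\ref{p1}. The only point deserving a moment's care is conceptual rather than computational: one must set $\phi_{t}=0$ in (\ref{e18}) rather than naively set $\Delta\phi=0$ in (\ref{e2}). A time-independent weight forces $\phi_{t}$ to vanish but leaves $\Delta\phi$ completely free, so the correct route is to kill the $\phi_{t}$-term at its source, before the identity (\ref{e19}) that converts it into $\Delta\phi$-integrals is ever applied. Once this is recognized, combining coefficients is routine and the corollary follows at once.
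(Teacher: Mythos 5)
Your proposal is correct and follows the route the paper intends: the corollary is obtained by specializing the proof of Proposition~\ref{p1}, killing the $-\int_{M}Z\nabla\phi_{t}.\nabla f\,f\,d\mu$ term in (\ref{e18}) via $\phi_{t}=0$ and then applying (\ref{e191}), and your coefficient bookkeeping $(2\rho-1)+\rho(n-2)=-(1-n\rho)$ and $(1+\rho p-2\rho)-\rho(n-2)=1+\rho p-\rho n$ reproduces (\ref{e21}) exactly. Your cautionary remark that one must drop the $\phi_{t}$-term at its source rather than set $\Delta\phi=0$ in (\ref{e2}) is also the correct reading, since the $\Delta\phi$ integrals in (\ref{e2}) exist only because of the substitution $\phi_{t}=\Delta\phi$ made in (\ref{e19}).
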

We can get the evolution for the first eigenvalue of the geometric operator $\Delta_{p}$ under the Ricci-Bourguignon flow (\ref{rb}) and along the Ricci flow, which studied in \cite{JY}. Also, in Corollary \ref{c3}, if $p=2$ then we can obtain the evolution for the first eigenvalue of the Witten-Laplace operator along the the Ricci-Bourguignon flow (\ref{rb}), which investigate  in \cite{Az2}.
\begin{theorem}\label{t1}
Let $(M,g(t),\phi(t)),\,\,\,t\in[0,T)$ be a solution of  the     flow (\ref{rb0}) on the smooth  closed oriented Riemannain manifold $(M^{n}, g_{0})$ for $\rho<\frac{1}{2(n-1)}$. Let $R_{ij}-(\beta R+\gamma\Delta \phi) g_{ij}\geq0$, $\beta\geq \frac{1+\rho(p-n)}{p}$ and $\gamma\geq \frac{1}{p}$  along the  flow (\ref{rb0}) and  $R<\Delta \phi$ in $M\times[0,T)$. Suppose that  $\lambda(t)$ denotes the evolution the first non-zero eigenvalue of the weighted $p$-Laplacian $\Delta_{p,\phi}$ then
\begin{enumerate}
  \item  If $R_{\min}(0)\geq 0$, then $\lambda(t)$ is nondecreasing  along the  Ricci-Bourguignon flow for any $t\in[0,T)$.
  \item  If $R_{\min}(0)> 0$, then the quantity $\lambda(t)(n-2R_{\min}(0)t)^{\frac{1}{n}}$ is nondecreasing  along the  Ricci-Bourguignon flow for  $T\leq \frac{n}{2R_{\min}(0)}$.
  \item If $R_{\min}(0)<0$, then the quantity $\lambda(t)(n-2R_{\min}(0)t)^{\frac{1}{n}}$ is nondecreasing  along the  Ricci-Bourguignon flow for any $t\in[0,T)$.
\end{enumerate}

\end{theorem}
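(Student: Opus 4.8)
The plan is to turn the first–variation identity (\ref{e2}) of Proposition \ref{p1} into a differential inequality for $\lambda(t)$ and then integrate it. Every term in (\ref{e2}) has transparent sign except the Ricci term $p\int_{M}Z R^{ij}\nabla_{i}f\nabla_{j}f\,d\mu$, and this is exactly where the hypothesis $R_{ij}-(\beta R+\gamma\Delta\phi)g_{ij}\geq 0$ enters. Since $Z=|\nabla f|^{p-2}\geq 0$ and $Z|\nabla f|^{2}=|\nabla f|^{p}$, contracting the tensor inequality with $\nabla_{i}f\,\nabla_{j}f$ gives
\[
p\int_{M}Z R^{ij}\nabla_{i}f\nabla_{j}f\,d\mu\geq p\beta\int_{M}R|\nabla f|^{p}\,d\mu+p\gamma\int_{M}(\Delta\phi)|\nabla f|^{p}\,d\mu .
\]
Substituting this into (\ref{e2}) and collecting the $\int_{M}R|\nabla f|^{p}$ and $\int_{M}(\Delta\phi)|\nabla f|^{p}$ terms, the resulting coefficients are $p\beta-(1+\rho p-\rho n)$ and $p\gamma-1$, which are nonnegative precisely because $\beta\geq\frac{1+\rho(p-n)}{p}$ and $\gamma\geq\frac1p$. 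Using the normalizations $\int_{M}|f|^{p}\,d\mu=1$ and $\int_{M}|\nabla f|^{p}\,d\mu=\lambda$, together with $\Delta\phi>R$ to compare the $\Delta\phi$–weighted integrals against the $R$–weighted ones, I would reduce the variation formula to a lower bound of the form $\frac{d\lambda}{dt}\geq c(n,\rho)\,R_{\min}(t)\,\lambda(t)$.

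The second ingredient is the evolution of $R_{\min}(t)$. By Lemma \ref{l}(5) we have $\partial_{t}R=[1-2(n-1)\rho]\Delta R+2|Ric|^{2}-2\rho R^{2}$, and since $\rho<\frac{1}{2(n-1)}$ the coefficient of $\Delta R$ is positive, so the weak maximum principle applies to this scalar reaction–diffusion equation. Feeding in $|Ric|^{2}\geq\frac{R^{2}}{n}$ produces a Riccati-type differential inequality for $R_{\min}$, whose integration yields the lower bound $R_{\min}(t)\geq\frac{nR_{\min}(0)}{\,n-2R_{\min}(0)t\,}$ on the relevant interval. This is also the origin of the interval restriction $T\leq\frac{n}{2R_{\min}(0)}$ in case (2): the bound is only meaningful while $n-2R_{\min}(0)t>0$.

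With these two facts I would argue case by case. In case (1), $R_{\min}(0)\geq 0$ forces $R\geq 0$ for all $t$ by the maximum principle, hence $\Delta\phi>R\geq 0$; then every term in the lower bound for $\frac{d\lambda}{dt}$ is nonnegative and $\lambda$ is nondecreasing. In cases (2) and (3) I would insert the Riccati bound for $R_{\min}(t)$ into $\frac{d\lambda}{dt}\geq c\,R_{\min}(t)\lambda$ and integrate the resulting inequality $\frac{d}{dt}\log\lambda\geq\frac{2R_{\min}(0)}{n(n-2R_{\min}(0)t)}$, which is exactly equivalent to $\frac{d}{dt}\bigl[\lambda(t)(n-2R_{\min}(0)t)^{1/n}\bigr]\geq 0$; the sign of $R_{\min}(0)$ then only determines whether the time interval is finite (case 2) or all of $[0,T)$ (case 3).

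The step I expect to be the main obstacle is arranging the combined estimate so that it produces a constant compatible with the $\frac1n$–exponent, and in particular controlling the gradient integrals in case (3). When $R_{\min}(0)<0$ the scalar curvature need not remain nonnegative, so $\int_{M}R|\nabla f|^{p}\,d\mu$ and $\int_{M}(\Delta\phi)|\nabla f|^{p}\,d\mu$ can be negative and must be bounded below using $R\geq R_{\min}(t)$ pointwise together with $\int_{M}|\nabla f|^{p}\,d\mu=\lambda$ and $\Delta\phi>R$, rather than simply discarded as in cases (1) and (2). Keeping track of these signs, and checking that the coefficient of $R_{\min}(t)\lambda$ has the right size so that the inequality points the correct way after multiplying through by the (negative) quantity $R_{\min}(0)$, is the delicate bookkeeping; once the differential inequality $\frac{d}{dt}\log\lambda\geq\frac{2R_{\min}(0)}{n(n-2R_{\min}(0)t)}$ is in hand, the maximum-principle and integration steps are routine.
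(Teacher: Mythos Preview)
Your proposal is correct and follows essentially the same route as the paper: bound the Ricci term in (\ref{e2}) using $R_{ij}\geq(\beta R+\gamma\Delta\phi)g_{ij}$, use the hypotheses $\beta\geq\frac{1+\rho(p-n)}{p}$, $\gamma\geq\frac1p$, $\Delta\phi>R$ to make all coefficients of $\int R|\nabla f|^{p}$ and $\int(\Delta\phi)|\nabla f|^{p}$ nonnegative, replace $R$ by its maximum-principle lower barrier $\sigma(t)=\frac{nc}{n-2(1-n\rho)ct}$, and integrate the resulting inequality $\frac{d}{dt}\log\lambda\geq A\,\sigma(t)$. The one place you are more cautious than necessary is case (3): the paper does not discard any terms in case (1)/(2) and then reinsert them in case (3); instead it uses the single pointwise inequality $R\geq\sigma(t)$ (valid regardless of the sign of $c=R_{\min}(0)$) together with the nonnegativity of the collected coefficients, which already handles all three cases uniformly.
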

\begin{proof}
According to (\ref{e2}) of Proposition \ref{p1}, we have
\begin{eqnarray}\nonumber
\frac{\partial }{\partial t}\lambda(t,f(t))|_{t=t_{0}}&\geq&\lambda(t_{0})(1-n\rho)\int_{M} R|f|^{p}\,d\mu-(1+\rho p-\rho n)\int_{M}R|\nabla f|^{p}d\mu
\\\label{e2p}
&&+p\beta \int_{M}R|\nabla f|^{p}\,d\mu+p\gamma\int_{M}(\Delta \phi)|\nabla f|^{p}d\mu
\\\nonumber&&+\lambda(t_{0})\int_{M}R|f|^{p}\,d\mu-\int_{M}(\Delta \phi)|\nabla f|^{p}d\mu
\\\nonumber&=&\lambda(t_{0})(2-n\rho)\int_{M} R|f|^{p}\,d\mu+(p\gamma-1)\int_{M}R|\nabla f|^{p}d\mu
\\\nonumber&&+[p\beta-(1+\rho p-\rho n)]\int_{M}R|\nabla f|^{p}d\mu.
\end{eqnarray}
On  the other hand, the scalar curvature along the  Ricci-Bourguignon  flow evolves by
\begin{equation}\label{va}
\frac{\partial R}{\partial t}=(1-2(n-1)\rho)\Delta R+2|Ric|^{2}-2\rho R^{2}.
\end{equation}
The  inequality $|Ric|^{2}\geq \frac{R^{2}}{n}$ yields
\begin{equation}\label{tmp}
\frac{\partial R}{\partial t}\geq (1-2(n-1)\rho)\Delta R+2(\frac{1}{n}-\rho) R^{2}.
\end{equation}
Since the solution to the corresponding ODE $y'=2(\frac{1}{n}-\rho)y^{2}$ with initial value $c=\mathop {\min}\limits_{x \in M}\,R(0)=R_{\min}(0)$ is
\begin{equation}\label{v1a}
\sigma(t)=\frac{nc}{n-2(1-n\rho)ct}.
\end{equation}
Notic that $\sigma(t)$ defined on $[0,T')$ where $T'=\min\{T, \frac{n}{2(1-n)\rho c}\}$ when $c>0$ and on $[0,T)$ when  $c\leq 0$.
Using the maximum principle to (\ref{tmp}), we have $R_{g(t)}\geq \sigma(t) $. Therefore (\ref{e2p}) becomes
\begin{equation*}
\frac{d}{dt}\lambda(t,f(t))|_{t=t_{0}}\geq A\lambda(t_{0}) \sigma(t_{0}),
\end{equation*}
where
$ A=p(\beta+\gamma)-\rho (p+2n)$ and  this results that in any sufficiently small neighborhood of $t_{0}$ as $I_{0}$,  we obtain
$$\frac{d}{dt}\lambda(t,f(t))\geq A\lambda(f,t) \sigma(t). $$
Integrating of both sides of  the last inequality with respect to $t$ on $[t_{1}, t_{0}]\subset I_{0}$, we have
\begin{equation*}
\ln \frac{\lambda(t_{0},f(t_{0}))}{ \lambda(f(t_{1}),t_{1})}>\ln (\frac{n-2(1-n\rho)ct_{1}}{n-2(1-n\rho)ct_{0}})^{\frac{nA}{2(1-n\rho)}}.
\end{equation*}
Since $\lambda(t_{0},f(t_{0}))=\lambda(t_{0})$ and  $\lambda(f(t_{1}),t_{1})\geq\lambda(t_{1})$ we conclude that
\begin{equation*}
\ln \frac{\lambda(t_{0})}{ \lambda(t_{1})}>\ln (\frac{n-2(1-n\rho)ct_{1}}{n-2(1-n\rho)ct_{0}})^{\frac{nA}{2(1-n\rho)}},
\end{equation*}
 that is the quantity  $\lambda(t)(n-2(1-n\rho)ct)^{\frac{nA}{2(1-n\rho)}}$ is strictly increasing  in any sufficiently small neighborhood of $t_{0}$. Since $t_{0}$ is arbitrary, then $\lambda(t)(n-2(1-n\rho)ct)^{\frac{nA}{2(1-n\rho)}}$ is strictly increasing along the   flow  (\ref{rb0}) on $[0,T)$. Now we have,
\begin{enumerate}
  \item If $R_{\min}(0)\geq0$, by the non-negatively  of $R_{g(t)}$  preserved along the Ricci-Bourguignon flow hence $\frac{d}{dt}\lambda(t,f(t))\geq0$, consequently $\lambda(t)$ is strictly increasing along the   flow (\ref{rb}) on $[0,T)$.
\item  If $R_{\min}(0)>0$ then $\sigma(t)$ defined on $[0,T')$, thus the quantity  $\lambda(t)(n-2(1-n\rho)ct)^{\frac{nA}{2(1-n\rho)}}$ is nondecreasing along the flow (\ref{rb}) on $[0,T')$.
\item If $R_{\min}(0)<0$ then $\sigma(t)$ defined on $[0,T')$, thus the quantity  $\lambda(t)(n-2(1-n\rho)ct)^{\frac{nA}{2(1-n\rho)}}$ is nondecreasing along the flow (\ref{rb}) on $[0,T')$.
\end{enumerate}
\end{proof}
\begin{theorem}
Let $(M^{n},g(t),\phi(t))$, $t\in[0,T)$ be a solution of the flow (\ref{rb0}) on a closed Riemannian manifold $(M^{n},g_{0})$ with $R(0)>0$ for $\rho<\frac{1}{2(n-1)}$. Let $\lambda(t)$ be the first eigenvalue of the weighted  $p$-Laplacian $\Delta_{p,\phi}$, then $\lambda(t)\to+\infty$ in finite time for $p\geq2$  where $Ric-\nabla\phi\otimes\nabla\phi\geq \beta R g$ in $M\times[0,T)$ and $\beta\in[0,\frac{1}{n}]$ is a constant.
\end{theorem}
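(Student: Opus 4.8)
The plan is to turn the first--variation identity (\ref{e2}) into a differential inequality of the form $\frac{d}{dt}\ln\lambda(t)\geq c_0\,\sigma(t)$, in which $\sigma(t)$ is the maximum--principle barrier for the scalar curvature that already blows up in finite time; integrating this inequality then drags $\lambda$ to infinity. The barrier is exactly the one built in Theorem \ref{t1}: since $R(0)>0$, the evolution (\ref{va}) and the estimate (\ref{tmp}) give, via the scalar maximum principle, $R_{g(t)}\geq\sigma(t)$ with
\begin{equation*}
\sigma(t)=\frac{nc}{\,n-2(1-n\rho)ct\,},\qquad c=R_{\min}(0)>0,
\end{equation*}
and $\sigma(t)\uparrow+\infty$ as $t\to T'=\frac{n}{2(1-n\rho)c}$. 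Note $1-n\rho>0$ here because $\rho<\frac{1}{2(n-1)}\leq\frac1n$.

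The first step is to insert the curvature hypothesis into (\ref{e2}). Since $p\geq2$ the weight $Z=|\nabla f|^{p-2}$ is nonnegative, so testing $Ric-\nabla\phi\otimes\nabla\phi\geq\beta R\,g$ on $\nabla f$ and multiplying by $Z\geq0$ gives
\begin{equation*}
p\int_M Z R^{ij}\nabla_if\nabla_jf\,d\mu\ \geq\ p\beta\int_M R|\nabla f|^p\,d\mu+p\int_M Z(\nabla\phi\cdot\nabla f)^2\,d\mu\ \geq\ p\beta\int_M R|\nabla f|^p\,d\mu .
\end{equation*}
Substituting this into (\ref{e2}) collects the two $\int_M R|\nabla f|^p\,d\mu$ terms into a single one with coefficient $K:=p\beta-(1+\rho p-\rho n)$. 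Using the normalizations $\int_M|f|^p\,d\mu=1$ and $\int_M|\nabla f|^p\,d\mu=\lambda$ together with $R\geq\sigma(t)>0$ (and $K\geq0$, equivalently $\beta\geq\frac{1+\rho(p-n)}{p}$, which is where the admissible range of $\beta$ and the bound on $\rho$ are used), I would bound the curvature part of the right--hand side below by
\begin{equation*}
\lambda(1-n\rho)\sigma(t)+K\sigma(t)\lambda=\big[(1-n\rho)+K\big]\sigma(t)\lambda=p(\beta-\rho)\,\sigma(t)\,\lambda ,
\end{equation*}
where the coefficient $c_0:=p(\beta-\rho)\geq 1-n\rho>0$.

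Granting for the moment that the remaining weighted terms are nonnegative, this yields $\frac{d}{dt}\lambda\geq c_0\sigma(t)\lambda$, i.e. $\frac{d}{dt}\ln\lambda\geq c_0\sigma(t)$. Integrating over $[0,t]$ and using $\int_0^t\sigma(s)\,ds=\frac{n}{2(1-n\rho)}\ln\frac{n}{\,n-2(1-n\rho)ct\,}$ gives the explicit lower bound
\begin{equation*}
\lambda(t)\ \geq\ \lambda(0)\left(\frac{n}{\,n-2(1-n\rho)ct\,}\right)^{\frac{c_0 n}{2(1-n\rho)}},
\end{equation*}
whose right--hand side tends to $+\infty$ as $t\to T'$. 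Since the barrier already forces the flow to become singular no later than $T'$, this is precisely the assertion that $\lambda(t)\to+\infty$ in finite time.

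The one genuine obstacle is the weighted pair coming from the evolution of $\phi$, namely $\lambda\int_M(\Delta\phi)|f|^p\,d\mu-\int_M(\Delta\phi)|\nabla f|^p\,d\mu=\int_M(\Delta\phi)\big(\lambda|f|^p-|\nabla f|^p\big)\,d\mu$. Its integrand has no definite sign, and although $\int_M(\lambda|f|^p-|\nabla f|^p)\,d\mu=0$ by the eigenvalue identity, that alone does not control the $\Delta\phi$--weighted integral. To dispose of it I would integrate by parts back through the identity (\ref{e19}), rewriting the combination as $\int_M Z\,\nabla(\Delta\phi)\cdot\nabla f\,f\,d\mu$ (using $\phi_t=\Delta\phi$), and then attempt to absorb it into the nonnegative reserve $p\int_M Z(\nabla\phi\cdot\nabla f)^2\,d\mu$ discarded above, or else record the sign condition on $\Delta\phi$ under which the pair is nonnegative. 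Keeping the net logarithmic rate $c_0$ strictly positive after this absorption is the crux; the rest is the routine curvature--barrier blow-up mechanism already exploited in Theorem \ref{t1}.
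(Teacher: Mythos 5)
Your route is genuinely different from the paper's, but as written it has real gaps. First, the coefficient bookkeeping: after inserting $Ric-\nabla\phi\otimes\nabla\phi\geq\beta Rg$ into (\ref{e2}) you need $K=p\beta-(1+\rho p-\rho n)\geq0$, i.e. $\beta\geq\frac{1+\rho(p-n)}{p}$, in order to combine the two $\int_MR|\nabla f|^p\,d\mu$ terms with a nonnegative weight and to get $c_0=p(\beta-\rho)\geq 1-n\rho>0$. This is \emph{not} implied by the theorem's hypotheses: the statement only assumes $\beta\in[0,\frac1n]$, and e.g. $\beta=0$, $\rho=0$, $p\geq2$ are admissible but give $K=-1<0$ and $c_0=0$, so the differential inequality $\frac{d}{dt}\lambda\geq c_0\sigma(t)\lambda$ does not follow; with $K<0$ you would need an \emph{upper} curvature bound, which is not available. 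Second, the $\Delta\phi$-pair $\lambda\int_M(\Delta\phi)|f|^p\,d\mu-\int_M(\Delta\phi)|\nabla f|^p\,d\mu$ is left uncontrolled, and you say so yourself; rewriting it as $\int_MZ\,\nabla(\Delta\phi)\cdot\nabla f\,f\,d\mu$ brings in third derivatives of $\phi$ with no sign, and it cannot in general be absorbed into the discarded reserve $p\int_MZ(\nabla\phi\cdot\nabla f)^2\,d\mu$. Third, even granting $\frac{d}{dt}\ln\lambda\geq c_0\sigma(t)$, integration only forces $\lambda\to+\infty$ as $t\to T'=\frac{n}{2(1-n\rho)c}$; if the maximal existence time $T$ of the flow is strictly smaller than $T'$, your integrated bound remains finite on $[0,T)$ and gives no blow-up of $\lambda$.

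The paper closes the argument by an entirely different, static mechanism that uses exactly the stated hypotheses: it never touches the variation formula (\ref{e2}). At each fixed time it applies the weighted $p$-Reilly formula (\ref{e36}) with the pointwise estimate (\ref{e39}), identifies $|\nabla\phi\cdot\nabla f|^2$ with $\nabla\phi\otimes\nabla\phi(\nabla f,\nabla f)$ as in (\ref{e43}) so that the hypothesis $Ric-\nabla\phi\otimes\nabla\phi\geq\beta Rg$ enters directly, and then uses a H\"older inequality on $\lambda\int_M|\nabla f|^{2p-2}\,d\mu=(p-1)\int_M|\nabla f|^p|f|^{p-2}\,d\mu$ to obtain a lower bound for $\lambda(t)$ of the form $\lambda(t)\geq\big[(\beta R_{\min}(t)+\tfrac{1}{\sqrt n}\min_M|\phi_t|)\cdot\mathrm{const}\big]^{p/2}(p-1)^{-(p-1)}$ (with the obvious simplification for $p=2$), after which the blow-up $R_{\min}(t)\to+\infty$ in finite time from \cite{GC} (using $R(0)>0$) finishes the proof. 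To make your flow-monotonicity approach work you would have to add hypotheses not present in the statement (a relation $p\beta\geq1+\rho(p-n)$ and a sign or smallness condition on $\Delta\phi$), so the Reilly-formula route is the one that actually matches the theorem as stated.
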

\begin{proof}
The weighted $p$-Reilly formula on closed Riemannian manifolds (see \cite{WL}) as follows
\begin{equation}\label{e36}
\int_{M}\left[(\Delta_{p,\phi}f)^{2}-|\nabla f|^{2p-4}|Hess \,f|_{A}^{2}\right]
d\mu=\int_{M}|\nabla f|^{2p-4}(Ric+\nabla^{2}\phi)(\nabla f,\nabla f)\,d\mu,
\end{equation}
where $f\in C^{\infty}(M)$ and
\begin{equation}\label{e37}
|Hess\,f|_{A}^{2}=|Hess\,f|^{2}+\frac{p-2}{2}\frac{|\nabla|\nabla f|^{2} f|^{2}}{|\nabla f|^{2}}+\frac{(p-2)^{2}}{4}\frac{<\nabla f,\nabla |\nabla f|^{2}>^{2}}{|\nabla f|^{4}}.
\end{equation}
By a straightforward computation we have the following inequality
\begin{eqnarray}\nonumber
|\nabla f|^{2p-4}|Hess \,f|_{A}^{2}&\geq&\frac{1}{n}\left( \Delta_{p,\phi}f+|\nabla f|^{p-2}<\nabla\phi,\nabla f>\right)^{2}\\\label{e39}
&\geq& \frac{1}{1+n}(\Delta_{p,\phi}f)^{2}-|\nabla f|^{2p-4}|\nabla\phi.\nabla f|^{2}.
\end{eqnarray}
Recall that $\Delta_{p,\phi}f=-\lambda |f|^{p-2}f$, which implies
\begin{equation}\label{e40}
\int_{M}(\Delta_{p,\phi}f)^{2} d\mu=\lambda^{2}\int_{M} |f|^{2p-2}d\mu.
\end{equation}
Combining (\ref{e39}) and (\ref{e40}) we can write
\begin{eqnarray}\nonumber
\int_{M}\left[(\Delta_{p,\phi}f)^{2}-|\nabla f|^{2p-4}|Hess \,f|_{A}^{2}\right]
d\mu&\leq&(1-  \frac{1}{1+n})\lambda^{2}\int_{M} |f|^{2p-2}d\mu\\
\label{e41}&&+\int_{M}|\nabla f|^{2p-4}|\nabla\phi.\nabla f|^{2}d\mu,
\end{eqnarray}
putting  (\ref{e41}) in (\ref{e36})  yields
\begin{eqnarray}\nonumber
(1-  \frac{1}{1+n})\lambda^{2}\int_{M} |f|^{2p-2}d\mu&+&\int_{M}|\nabla f|^{2p-4}|\nabla\phi.\nabla f|^{2}d\mu\geq
\int_{M}|\nabla f|^{2p-4}Ric(\nabla f,\nabla f)\,d\mu\\\label{e42}
&&+\int_{M}|\nabla f|^{2p-4}\nabla^{2}\phi(\nabla f,\nabla f)\,d\mu.
\end{eqnarray}
By identifying $ \nabla\phi\otimes \nabla \phi(\nabla f, \nabla f)$ with $|\nabla\phi.\nabla f|^{2}$ (see \cite{DL}), we obtain
\begin{equation}\label{e43}
\int_{M}|\nabla f|^{2p-4}\ \nabla\phi\otimes \nabla \phi(\nabla f,\nabla f)\,d\mu=\int_{M}|\nabla f|^{2p-4}|\nabla\phi.\nabla f|^{2}d\mu,
\end{equation}
therefore it and  $Ric-\nabla\phi\otimes\nabla\phi\geq \beta R g$ result that
\begin{equation}\label{e44}
(1-  \frac{1}{1+n})\lambda^{2}\int_{M} |f|^{2p-2}d\mu\geq\beta \int_{M}R|\nabla f|^{2p-2}d\mu+\int_{M}|\nabla f|^{2p-4}\nabla^{2}\phi(\nabla f,\nabla f)\,d\mu.
\end{equation}
Now, since $\phi$ satisfies in $\phi_{t}=\Delta\phi$, we get
\begin{equation}\label{e45}
|\nabla^{2}\phi|\geq \frac{1}{\sqrt{n}}|\Delta \phi|=\frac{1}{\sqrt{n}}| \phi_{t}|,
\end{equation}
hence
\begin{eqnarray}\nonumber
(1-  \frac{1}{1+n})\lambda^{2}\int_{M} |f|^{2p-2}d\mu&\geq&\beta \int_{M}R|\nabla f|^{2p-2}d\mu+\frac{1}{\sqrt{n}}\int_{M}| \phi_{t}||\nabla f|^{2p-2}d\mu\\\label{e46}
&\geq& (\beta R_{\min}(t)+\frac{1}{\sqrt{n}}\mathop {\min }\limits_{x \in M}| \phi_{t}|)\int_{M}|\nabla f|^{2p-2}d\mu.
\end{eqnarray}
Multiplying  $\Delta_{p,\phi}f=-\lambda |f|^{p-2}f$ by $ |f|^{p-2}f$ on both sides, we obtain $ |f|^{p-2}f\Delta_{p,\phi}f=-\lambda |f|^{2p-2}f$. Then integrating by parts  and the H\"{o}lder inequality for $p>2$, we obtain
 \begin{eqnarray*}
\lambda\int_{M}|\nabla f|^{2p-2}d\mu&=&-\int_{M}|f|^{p-2}f\Delta_{p,\phi}f\,d\mu=(p-1)\int_{M}|\nabla f|^{p}|f|^{p-2}d\mu\\
&\leq&(p-1)\left(\int_{M}(|\nabla f|^{p})^{\frac{2p-2}{p}}d\mu\right)^{\frac{p}{2p-2}}\left(\int_{M}(|f|^{p-2})^{\frac{2p-2}{p-2}}d\mu\right)^{\frac{p-2}{2p-2}}\\
&=&(p-1)\left(\int_{M}|\nabla f|^{2p-2}d\mu\right)^{\frac{p}{2p-2}}\left(\int_{M}|f|^{2p-2}d\mu\right)^{\frac{p-2}{2p-2}},
\end{eqnarray*}
so we can conclude that
\begin{equation*}
\int_{M}|\nabla f|^{2p-2}d\mu\geq (\frac{\lambda}{p-1})^{\frac{2p-2}{p}}\int_{M}|f|^{2p-2}d\mu
\end{equation*}
and it implies
\begin{equation*}
(1-  \frac{1}{1+n})\lambda^{2}\int_{M} |f|^{2p-2}d\mu\geq (\beta R_{\min}(t)+\frac{1}{\sqrt{n}}\mathop {\min }\limits_{x \in M}| \phi_{t}|)(\frac{\lambda}{p-1})^{\frac{2p-2}{p}}\int_{M}|f|^{2p-2}d\mu,
\end{equation*}
more precisely
\begin{equation*}
\left[ (1-  \frac{1}{1+n})\lambda^{2}- (\beta R_{\min}(t)+\frac{1}{\sqrt{n}}\mathop {\min }\limits_{x \in M}| \phi_{t}|)(\frac{\lambda}{p-1})^{\frac{2p-2}{p}}\right]\int_{M}|f|^{2p-2}d\mu\geq0.
\end{equation*}
Since $\int_{M}|f|^{2p-2}d\mu\geq0$, for $p>2$ we get
\begin{equation*}
\lambda(t)\geq \left[  (\beta R_{\min}(t)+\frac{1}{\sqrt{n}}\mathop {\min }\limits_{x \in M}| \phi_{t}|) \frac{1+n\alpha}{1+n\alpha-\alpha}\right]^{\frac{p}{2}}\frac{1}{(p-1)^{(p-1)}}.
\end{equation*}
Since $R_{\min}(t)\to +\infty$ (see \cite{GC}) and $\mathop {\min }\limits_{x \in M}| \phi_{t}|$ is finite then $\lambda(t)\to +\infty$. For $p=2$, (\ref{e46}) results that
\begin{equation*}
(1-  \frac{1}{1+n})\lambda^{2}\int_{M} |f|^{2}d\mu\geq (\beta R_{\min}(t)+\frac{1}{\sqrt{n}}\mathop {\min }\limits_{x \in M}| \phi_{t}|)\lambda\int_{M}| f|^{2}d\mu,
\end{equation*}
hence
\begin{equation*}
\lambda(t)\geq (\beta R_{\min}(t)+\frac{1}{\sqrt{n}}\mathop {\min }\limits_{x \in M}| \phi_{t}|) \frac{1+n\alpha}{1+n\alpha-\alpha}.
\end{equation*}
This implies that $\lambda(t)\to +\infty$.
\end{proof}
\begin{corollary}
Let $(M, g(t))$, $t\in[0,T)$  be a solution of the Ricci-Bourguignon flow (\ref{rb}) on  the smooth  closed Riemannnian manifold $(M^{3}, g_{0})$, $\phi$ is independent of $t$,   $\frac{1}{6}<\rho <\frac{1}{4}$ and $\lambda(t)$ be the first eigenvalue of the weighted  $p$-Laplacian $\Delta_{p,\phi}$. If $R_{ij}>\frac{1+\rho p-3\rho}{p}Rg_{ij}$ on $M^{n}\times\{0\}$ and $c=R_{\min}(0)\geq0$
then the quantity $\lambda(t)(3-2(1-3\rho)ct)^{\frac{3}{2}}$ is nondecreasing along the flow (\ref{rb}) for $p\geq 3$.
\end{corollary}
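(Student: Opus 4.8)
The plan is to specialize the $\phi$-independent variation formula of Corollary \ref{c3} to dimension $n=3$ and then rerun the differential-inequality argument of Theorem \ref{t1}, tracking the constants so that the exponent $\tfrac32$ emerges. Putting $n=3$ in (\ref{e21}) and recalling $\int_M|f|^p\,d\mu=1$ and $Z=|\nabla f|^{p-2}$, I would start from
\begin{equation*}
\frac{\partial }{\partial t}\lambda(t,f(t))\Big|_{t=t_{0}}=\lambda(t_{0})(1-3\rho)\int_{M} R|f|^{p}\,d\mu-(1+\rho p-3\rho)\int_{M} R|\nabla f|^{p}d\mu+p\int_{M}ZR^{ij}\nabla_{i} f\nabla_{j} f\,d\mu.
\end{equation*}

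First I would use the pinching $R_{ij}\ge\frac{1+\rho p-3\rho}{p}Rg_{ij}$. Contracting against $\nabla f\otimes\nabla f$ gives $R^{ij}\nabla_i f\nabla_j f\ge\frac{1+\rho p-3\rho}{p}R|\nabla f|^2$ pointwise, and multiplying by $pZ\ge0$ and integrating (using $Z|\nabla f|^2=|\nabla f|^p$) yields $p\int_M ZR^{ij}\nabla_i f\nabla_j f\,d\mu\ge(1+\rho p-3\rho)\int_M R|\nabla f|^p\,d\mu$. This cancels the middle term exactly, so
\begin{equation*}
\frac{\partial }{\partial t}\lambda(t,f(t))\Big|_{t=t_{0}}\ge\lambda(t_{0})(1-3\rho)\int_M R|f|^p\,d\mu.
\end{equation*}
Since $\tfrac16<\rho<\tfrac14$ gives $1-3\rho>0$ and $\lambda(t_0)>0$, and since the maximum-principle comparison of Theorem \ref{t1} gives $R_{g(t)}\ge\sigma(t)=\frac{3c}{3-2(1-3\rho)ct}\ge0$ (here $c=R_{\min}(0)\ge0$), the normalization $\int_M|f|^p\,d\mu=1$ turns this into $\frac{\partial }{\partial t}\lambda(t,f(t))|_{t=t_0}\ge\lambda(t_0)(1-3\rho)\sigma(t_0)$.

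I would then promote this to a genuine differential inequality $\frac{d}{dt}\ln\lambda(t)\ge(1-3\rho)\sigma(t)$ on a neighbourhood of each $t_0$, exactly as in the proof of Theorem \ref{t1} (using that $f(t)$ is an admissible test function, so $\lambda(t,f(t))\ge\lambda(t)$ for $t\ne t_0$). Since a direct computation gives $\frac{d}{dt}\ln(3-2(1-3\rho)ct)^{3/2}=-(1-3\rho)\sigma(t)$, adding the two shows
\begin{equation*}
\frac{d}{dt}\ln\big[\lambda(t)(3-2(1-3\rho)ct)^{3/2}\big]\ge0,
\end{equation*}
so the stated quantity is nondecreasing; note the exponent $\tfrac32=\tfrac n2$ is exactly what makes the two logarithmic derivatives cancel, which is the $\phi$-independent analogue of the general exponent $\frac{nA}{2(1-n\rho)}$ of Theorem \ref{t1} with effective constant $A=1-3\rho$.

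The main obstacle is that the Ricci pinching $R_{ij}\ge\frac{1+\rho p-3\rho}{p}Rg_{ij}$ is hypothesized only at $t=0$, whereas the cancellation above needs it for all $t\in[0,T)$. I would close this gap with Hamilton's tensor maximum principle applied to the evolution of $Ric$ under (\ref{rb}), verifying that in dimension $3$ the cone $\{R_{ij}\ge\beta Rg_{ij}\}$ with $\beta=\frac{1+\rho p-3\rho}{p}$ is invariant under the associated reaction ODE. This is exactly where the hypotheses $\tfrac16<\rho<\tfrac14$ and $p\ge3$ enter: they keep $\beta$ in the admissible range (in particular $\beta\le\tfrac13$, which is forced already by tracing the pinching when $R>0$), so that the cone is preserved along the flow. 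Everything else is the specialization of Theorem \ref{t1} to $n=3$ with $A=1-3\rho$.
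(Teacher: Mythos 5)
Your proposal is correct and follows essentially the same route as the paper: specialize Corollary \ref{c3} to $n=3$, use the (preserved) pinching $R_{ij}\geq\frac{1+\rho p-3\rho}{p}Rg_{ij}$ to absorb the gradient terms, bound $R$ below by $\sigma(t)=\frac{3c}{3-2(1-3\rho)ct}$ via the maximum principle, and integrate the resulting logarithmic differential inequality to get the exponent $\tfrac32$. The only difference is that you sketch a proof of the pinching preservation via Hamilton's tensor maximum principle, whereas the paper simply asserts this preservation (relying on the results of Catino et al.), so your write-up is, if anything, slightly more complete on that point.
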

\begin{proof}
The pinching inequality $R_{ij}>\frac{1+\rho p-3\rho}{p}Rg_{ij}$ for $\frac{1}{6}<\rho< \frac{1}{4}$ and  $p\geq 3$  is preserved along the Ricci-Bourguignon  flow, therefore we have,
\begin{equation*}
R_{ij}>\frac{1+\rho p-3\rho}{p}Rg_{ij},\,\,\,\,\,\,\,\text{on}\,\,[0,T)\times M.
\end{equation*}
Now according to Corollary \ref{c3} we get
\begin{equation*}
\frac{\partial }{\partial t}\lambda(t,f(t))|_{t=t_{0}}\geq\lambda(t_{0})(1-n\rho)\int_{M} R|f|^{p}\,d\mu
\end{equation*}
hence similar to proof of Theorem \ref{t1} we have $R_{g(t)}\geq \sigma(t)$ on $[0,T)$ and then
\begin{equation*}
\frac{\partial }{\partial t}\lambda(t,f(t))|_{t=t_{0}}\geq\lambda(t_{0})(1-n\rho)\sigma(t_{0})
\end{equation*}
thus we arrive at the
the quantity $\lambda(t)(3-2(1-3\rho)ct)^{\frac{3}{2}}$ is nondecreasing.
\end{proof}
\begin{theorem}\label{tt1}
Let $(M,g(t),\phi(t)),\,\,\,t\in[0,T)$ be a solution of  the     flow (\ref{rb0}) on the smooth  closed oriented Riemannain manifold $(M^{n}, g_{0})$ for $\rho<\frac{1}{2(n-1)}$. Let $0<R_{ij}<\frac{1+p\rho-n\rho}{p}Rg_{ij}$ on $M^{n}\times[0,T)$ and  $R<\Delta \phi$ in $M\times[0,T)$. Suppose that  $\lambda(t)$ denotes the evolution the first non-zero eigenvalue of the weighted $p$-Laplacian $\Delta_{p,\phi}$  and  $C=R_{\max}(0)$
then the quantity $\lambda(t)(1-CAt)^{\frac{n\rho-1}{A}}$ is strictly decreasing along  the    flow (\ref{rb0})  on $[0,T')$ where $T'=\min\{T, \frac{1}{CA}\}$ and $A=2\big(n(\frac{1-(n-p)\rho}{p})^{2}-\rho\big)$.
\end{theorem}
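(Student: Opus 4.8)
The plan is to run the argument of Theorem \ref{t1} in reverse: here the Ricci tensor is pinched from \emph{above}, so everything produces an \emph{upper} bound on $\frac{d}{dt}\lambda$ and hence a decreasing monotone quantity. I would start from the variation formula (\ref{e2}) of Proposition \ref{p1}, evaluated at $t=t_0$ on the normalized eigenfunction $f=f(t_0)$. Writing $\beta:=\frac{1+p\rho-n\rho}{p}=\frac{1-(n-p)\rho}{p}$, the hypothesis $R_{ij}<\beta R g_{ij}$ contracts against $\nabla_i f\,\nabla_j f\ge 0$ to give $R^{ij}\nabla_i f\nabla_j f\le\beta R|\nabla f|^2$ pointwise; multiplying by $pZ=p|\nabla f|^{p-2}\ge0$ and integrating yields
\begin{equation*}
p\int_M Z R^{ij}\nabla_i f\nabla_j f\,d\mu\le p\beta\int_M R|\nabla f|^p\,d\mu=(1+\rho p-\rho n)\int_M R|\nabla f|^p\,d\mu .
\end{equation*}
This cancels exactly against the $-(1+\rho p-\rho n)\int_M R|\nabla f|^p\,d\mu$ term in (\ref{e2}), leaving
\begin{equation*}
\frac{\partial}{\partial t}\lambda\Big|_{t_0}\le \lambda(t_0)(1-n\rho)\int_M R|f|^p\,d\mu+\lambda(t_0)\int_M(\Delta\phi)|f|^p\,d\mu-\int_M(\Delta\phi)|\nabla f|^p\,d\mu .
\end{equation*}

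Next I would invoke $R<\Delta\phi$ (with $R>0$, hence $\Delta\phi>0$) to dispose of the two Witten-type terms, aiming to show $\lambda(t_0)\int_M(\Delta\phi)|f|^p\,d\mu-\int_M(\Delta\phi)|\nabla f|^p\,d\mu\le0$. Note that the eigenvalue identity gives $\int_M(\lambda|f|^p-|\nabla f|^p)\,d\mu=0$, so these terms cancel identically when $\Delta\phi$ is constant, and $R<\Delta\phi$ is meant to control the non-constant part. \textbf{This is the step I expect to be the main obstacle:} $\Delta\phi$ carries no a priori \emph{upper} bound, and $R<\Delta\phi$ only bounds it from below, so the term $\lambda(t_0)\int_M(\Delta\phi)|f|^p\,d\mu$ is not obviously controllable; extracting the required sign information about $\int_M(\Delta\phi)(\lambda|f|^p-|\nabla f|^p)\,d\mu$ is the only place the hypothesis genuinely enters and is where care is needed. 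Granting this, one reduces to $\frac{\partial}{\partial t}\lambda|_{t_0}\le\lambda(t_0)(1-n\rho)\int_M R|f|^p\,d\mu$.

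For the curvature factor I would produce an upper barrier for $R$. Since $0<R_{ij}<\beta R g_{ij}$ forces every eigenvalue of $Ric$ into $(0,\beta R)$, we get $|Ric|^2<n\beta^2R^2$; inserting this into item (5) of Lemma \ref{l} gives
\begin{equation*}
\frac{\partial R}{\partial t}\le (1-2(n-1)\rho)\Delta R+2(n\beta^2-\rho)R^2=(1-2(n-1)\rho)\Delta R+AR^2 ,
\end{equation*}
with $A=2\big(n\beta^2-\rho\big)$ exactly as stated. Comparison with $\sigma'=A\sigma^2$, $\sigma(0)=C=R_{\max}(0)$, via the maximum principle yields $R\le\sigma(t)=\frac{C}{1-CAt}$ on $[0,T')$, $T'=\min\{T,\tfrac{1}{CA}\}$ (here $A>0$ is exactly what makes $T'$ meaningful). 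Hence $\int_M R|f|^p\,d\mu\le\sigma(t_0)$, and in a neighborhood of $t_0$,
\begin{equation*}
\frac{d}{dt}\ln\lambda\le (1-n\rho)\,\sigma(t)=(1-n\rho)\frac{C}{1-CAt}.
\end{equation*}

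Finally I would integrate this logarithmic inequality over $[t_1,t_0]$, using $\int(1-n\rho)\frac{C}{1-CAt}\,dt=-\frac{1-n\rho}{A}\ln(1-CAt)$, and then invoke $\lambda(f(t_1),t_1)\ge\lambda(t_1)$ together with $\lambda(f(t_0),t_0)=\lambda(t_0)$, precisely as in the proof of Theorem \ref{t1}. This gives
\begin{equation*}
\lambda(t_0)(1-CAt_0)^{\frac{1-n\rho}{A}}\le\lambda(t_1)(1-CAt_1)^{\frac{1-n\rho}{A}},
\end{equation*}
so that by arbitrariness of $t_0$ the quantity $\lambda(t)(1-CAt)^{\frac{1-n\rho}{A}}$ is strictly decreasing on $[0,T')$. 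This is the claimed monotonicity; I note that my integration produces the exponent $\frac{1-n\rho}{A}$, whose sign should be reconciled with the exponent $\frac{n\rho-1}{A}$ in the statement. The curvature barrier and the integration are routine adaptations of Theorem \ref{t1}; the only genuinely new analytic input is the control of the $\Delta\phi$-terms flagged above.
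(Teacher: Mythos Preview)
Your overall strategy is exactly the paper's: bound the Ricci term in (\ref{e2}) from above using $R_{ij}<\beta Rg_{ij}$ so that it cancels $-(1+\rho p-\rho n)\int_M R|\nabla f|^p\,d\mu$, derive the supersolution $R\le C/(1-CAt)$ from $|Ric|^2<n\beta^2R^2$ via the maximum principle, and integrate the resulting differential inequality for $\lambda$. The paper's proof is a one-paragraph sketch that asserts ``Replacing $0\le R\le\gamma(t)$ and $R_{ij}<\beta Rg_{ij}$ into equation (\ref{e2}) we can write $\frac{d}{dt}\lambda(t,f(t))\le\frac{(1-n\rho)C}{1-CAt}\lambda$'' without touching the $\Delta\phi$ terms at all; so the obstacle you flag---no upper control on $\lambda(t_0)\int_M(\Delta\phi)|f|^p\,d\mu$ from the hypothesis $R<\Delta\phi$---is real and is glossed over in the paper just as in your sketch. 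Your exponent computation $(1-n\rho)/A$ is the one that actually drops out of the integration; the paper's stated $(n\rho-1)/A$ has the opposite sign.

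There is, however, a genuine slip in your last step that is specific to the \emph{decreasing} direction. You anchor $f$ at $t_0$ (so $\lambda(t_0,f(t_0))=\lambda(t_0)$), integrate back to $t_1<t_0$, and then invoke $\lambda(t_1,f(t_1))\ge\lambda(t_1)$. But from
\[
\lambda(t_0,f(t_0))(1-CAt_0)^{\alpha}\ \le\ \lambda(t_1,f(t_1))(1-CAt_1)^{\alpha}
\]
together with $\lambda(t_1,f(t_1))\ge\lambda(t_1)$ you \emph{cannot} conclude $\lambda(t_0)(1-CAt_0)^{\alpha}\le\lambda(t_1)(1-CAt_1)^{\alpha}$: the Rayleigh inequality points the wrong way here (this is exactly why it \emph{does} work in Theorem~\ref{t1}, where the differential inequality is $\ge$). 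The easy fix is to reverse the anchoring---construct $f$ so that $f(t_1)$ is the eigenfunction and integrate forward to $t_0>t_1$, obtaining $\lambda(t_0)\le\lambda(t_0,f(t_0))$ on the left and $\lambda(t_1,f(t_1))=\lambda(t_1)$ on the right, which closes the argument. The paper does not spell this out either, but it is the only way the ``sequence of calculation'' can be made rigorous.
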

\begin{proof}
The proof is similar to proof  of Theorem \ref{t1} with the difference that we need to estimate the upper bound of the right hand  (\ref{e2}).
Notice that $ R_{ij}<\frac{1+p\rho-n\rho}{p}Rg_{ij}$ implies that $|Ric|^{2}<n(\frac{1+p\rho-n\rho}{p})^{2}R^{2}$.
So the evolution of the scalar curvature under the  Ricci-Bourguignon  flow evolve by (\ref{va}) and it  yields
\begin{equation}\label{tmp1}
\frac{\partial R}{\partial t}\leq (1-2(n-1)\rho)\Delta R+2\big(n(\frac{1+p\rho-n\rho}{p})^{2}-\rho\big) R^{2}.
\end{equation}
Applying the maximum principle to (\ref{tmp1}) we have $0\leq R_{g(t)}\leq \gamma(t) $ where
\begin{equation*}
\gamma(t)=\left[C^{-1}- 2\big(n(\frac{1+p\rho-n\rho}{p})^{2}-\rho\big)t  \right]^{-1}=\frac{C}{1-CAt}\,\,\,\,\,\,\text{on}\,\,\,[0,T').
\end{equation*}
Replacing $0\leq R_{g(t)}\leq \gamma(t) $ and $ R_{ij}<\frac{1-(n-2)\rho}{2}Rg_{ij}$ into  equation (\ref{e2}) we can write
 $\frac{d}{dt}\lambda(t,f(t))\leq \frac{(1-n\rho)C}{1-CAt} \lambda(t,f(t))$  in any sufficiently small neighborhood of $t_{0}$, hence with a sequence of calculation the quantity $\lambda(t)(1-CAt)^{\frac{n\rho-1}{A}}$ is strictly decreasing.
\end{proof}

\begin{theorem}\label{tt2}
Let $(M, g(t))$, $t\in[0,T)$ be a solution of the Ricci-Bourguignon  flow (\ref{rb}) on a closed manifold  $M^{n}$  and $\rho<\frac{1}{2(n-1)}$. Let  $\lambda(t)$ be the first nonzero eigenvalue of  the weighted $p$-Laplacian  of the metric $g(t)$ and $\phi$ be independent of $t$. If there is a non-negative constant $a$ such that
\begin{equation}\label{cc1}
 R_{ij}-\frac{1-(n-p)\rho}{p}Rg_{ij}\geq -ag_{ij}\,\,\,\,\,\,\text{in}\,\,\,M^{n}\times[0,T)
\end{equation}
and
\begin{equation}\label{cc2}
R\geq \frac{pa}{1-n\rho}\,\,\,\,\,\,\text{in}\,\,\,M^{n}\times\{0\}
\end{equation}
then $\lambda(t)$ is strictly monotone increasing  along the  Ricci-Bourguignon  flow.
\end{theorem}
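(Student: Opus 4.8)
The plan is to start from the first variation formula in Corollary \ref{c3} (equation (\ref{e21})), which is the one available to us since $\phi$ is independent of $t$, and to exploit the fact that the pinching constant $\frac{1-(n-p)\rho}{p}$ appearing in hypothesis (\ref{cc1}) is tuned precisely so that the two scalar-curvature terms carrying $\int_{M}R|\nabla f|^{p}\,d\mu$ cancel. First I would rewrite (\ref{cc1}) as the pointwise estimate $R^{ij}\nabla_{i}f\nabla_{j}f\geq\left(\frac{1-(n-p)\rho}{p}R-a\right)|\nabla f|^{2}$, multiply by $Z=|\nabla f|^{p-2}\geq0$ and integrate against $d\mu$ to get
\[
p\int_{M}ZR^{ij}\nabla_{i}f\nabla_{j}f\,d\mu\geq(1+\rho p-\rho n)\int_{M}R|\nabla f|^{p}\,d\mu-pa\int_{M}|\nabla f|^{p}\,d\mu.
\]
Substituting this into (\ref{e21}), the terms in $\int_{M}R|\nabla f|^{p}\,d\mu$ cancel, and after using the normalization $\int_{M}|\nabla f|^{p}\,d\mu=\lambda(t_{0})\int_{M}|f|^{p}\,d\mu=\lambda(t_{0})$ one is left with the clean inequality
\[
\frac{\partial}{\partial t}\lambda(t,f(t))|_{t=t_{0}}\geq\lambda(t_{0})\left[(1-n\rho)\int_{M}R|f|^{p}\,d\mu-pa\right].
\]

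The second step is to control the scalar curvature from below along the flow. Since $\rho<\frac{1}{2(n-1)}\leq\frac{1}{n}$, the coefficient $\frac{1}{n}-\rho$ is positive, so the evolution inequality (\ref{tmp}) together with the maximum principle, exactly as in the proof of Theorem \ref{t1}, shows that $R_{\min}(t)$ is bounded below by the solution $\sigma(t)$ of $y'=2(\frac{1}{n}-\rho)y^{2}$ with $\sigma(0)=R_{\min}(0)$. As this ODE is nondecreasing, $R_{\min}(t)\geq R_{\min}(0)$, while hypothesis (\ref{cc2}) supplies $R_{\min}(0)\geq\frac{pa}{1-n\rho}$. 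Hence $R(x,t)\geq\frac{pa}{1-n\rho}$ throughout $M\times[0,T)$, which, using $1-n\rho>0$ and $\int_{M}|f|^{p}\,d\mu=1$, forces $(1-n\rho)\int_{M}R|f|^{p}\,d\mu\geq pa$. Combining this with the displayed inequality yields $\frac{\partial}{\partial t}\lambda\geq0$ at $t_{0}$.

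To upgrade to strict monotonicity on the whole interval I would follow the neighborhood argument of Theorem \ref{t1}: the estimate holds at an arbitrary $t_{0}$, and passing to the smooth family $f(t)$ with $\lambda(t,f(t))\geq\lambda(t)$ and equality at $t_{0}$ lets me integrate the differential inequality over a small subinterval and then let $t_{0}$ vary. Strictness comes from the bracket being genuinely positive: when $a>0$ we have $R_{\min}(0)>0$, so the ODE comparison gives $R_{\min}(t)>R_{\min}(0)\geq\frac{pa}{1-n\rho}$ for $t>0$; when $a=0$ the strong maximum principle applied to (\ref{va}) gives $R>0$ for $t>0$ unless $(M,g)$ is Ricci-flat, in which case the flow is static.

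The main obstacle is the curvature-preservation step: verifying that the barrier $R\geq\frac{pa}{1-n\rho}$ is genuinely invariant under (\ref{rb}). This hinges on the sign of $\frac{1}{n}-\rho$ guaranteed by $\rho<\frac{1}{2(n-1)}$ and on the maximum principle for the reaction-diffusion inequality (\ref{tmp}), and it is exactly here that hypothesis (\ref{cc2}) is used. By contrast, the algebraic cancellation in the first step is the conceptual heart but becomes routine once the pinching constant in (\ref{cc1}) is recognized as matching the coefficient $1+\rho p-\rho n$ in (\ref{e21}).
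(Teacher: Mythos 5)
Your proposal is correct and follows essentially the same route as the paper: it starts from Corollary \ref{c3}, absorbs the $\int_M R|\nabla f|^p\,d\mu$ term into the pinching hypothesis (\ref{cc1}) (your pointwise estimate is exactly the paper's regrouping into $p\int_M (R_{ij}-\tfrac{1-(n-p)\rho}{p}Rg_{ij})Z\nabla_i f\nabla_j f\,d\mu\geq -ap\int_M|\nabla f|^p\,d\mu$), and then uses the maximum-principle preservation of the scalar curvature lower bound together with (\ref{cc2}) to conclude, finishing with the usual neighborhood argument. You are in fact more explicit than the paper on the curvature-preservation step and on where strictness comes from, which the paper leaves implicit.
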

\begin{proof}
By Corollary  \ref{c3}, we write evolution of first eigenvalue as follows
\begin{eqnarray}\nonumber
\frac{d}{dt}\lambda(t,f(t))|_{t=t_{0}}&=&(1-n\rho)\lambda(t_{0})\int_{M}R\,f^{2}d\mu\\\label{cc3}&&
+p\int_{M}(R_{ij}-\frac{1-(n-p)\rho}{p}Rg_{ij})|\nabla f|^{p-2}\nabla_{i}f\nabla_{j}f\,d\mu\\\nonumber
&\geq&(1-n\rho)\lambda(t_{0})\int_{M}R\,f^{2}d\mu-ap\int_{M}|\nabla f|^{p}d\mu\geq 0
\end{eqnarray}
combining (\ref{cc1}), (\ref{cc2}) and (\ref{cc3}), we arrive at $\frac{d}{dt}\lambda(f(t),t)>0$  in any sufficiently small neighborhood of $t_{0}$. Since $t_{0}$ is arbitrary, then $\lambda(t)$ is strictly increasing along the Ricci-Bourguignon  flow on $[0,T)$.
\end{proof}
\subsection{Variation of $\lambda(t)$ on a surface}
Now, we rewrite Proposition \ref{p1}  and Corollary \ref{c3} in some remarkable particular cases.
\begin{corollary}
Let  $(M^{2}, g(t))$, $t\in[0,T)$  be  a solution of  the  Ricci-Bourguignon  flow
on a closed Riemannnian surface $(M^{2}, g_{0})$ for $\rho<\frac{1}{2}$. If $\lambda(t)$ denotes the
evolution of the first eigenvalue of the weighted $p$-Laplacian under the Ricci-Bourguignon flow, then
\begin{enumerate}
  \item If $\frac{\partial\phi}{\partial t}=\Delta\phi$ then
\begin{eqnarray}\nonumber
\frac{d}{dt}\lambda(t,f(t))|_{t=t_{0}}&=&(1-2\rho)\lambda(t_{0})\int_{M}R\,|f|^{p}d\mu-(1+\rho\phi-2\rho-\frac{p}{2})\int_{M}R|\nabla f|^{p}d\mu\\\label{R21}&&+\lambda(t_{0})\int_{M}(\Delta \phi)|f|^{p}d\mu-\int_{M}(\Delta \phi)|\nabla f|^{p}d\mu.
\end{eqnarray}
  \item If  $\phi$ is  independent of $t$ then
\begin{equation}\label{R221}
\frac{d}{dt}\lambda(t,f(t))|_{t=t_{0}}=(1-2\rho)\lambda(t_{0})\int_{M}R\,|f|^{p}d\mu-(1+\rho\phi-2\rho-\frac{p}{2})\int_{M}|\nabla f|^{p}d\mu.
\end{equation}
\end{enumerate}
\end{corollary}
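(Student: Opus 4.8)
The plan is to obtain both identities by specializing the general first-variation formulas already proved, so that the only genuinely new ingredient is the two-dimensional structure of the curvature. For part (1) I would start directly from equation (\ref{e2}) of Proposition \ref{p1}, and for part (2) from equation (\ref{e21}) of Corollary \ref{c3}, since in the latter case $\phi$ is time-independent and so the two $\phi_t$-generated integrals (which in (\ref{e2}) produce the $\int_M(\Delta\phi)|f|^p\,d\mu$ and $\int_M(\Delta\phi)|\nabla f|^p\,d\mu$ terms) are simply absent. In both formulas every term except the curvature term $p\int_M Z R^{ij}\nabla_i f\nabla_j f\,d\mu$ is already in final form once we put $n=2$.

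The key step is the surface identity: on a closed Riemannian $2$-manifold the Ricci tensor is pointwise proportional to the metric, $R_{ij}=\tfrac{R}{2}g_{ij}$. Contracting with $\nabla_i f\nabla_j f$ gives $R^{ij}\nabla_i f\nabla_j f=\tfrac{R}{2}g^{ij}\nabla_i f\nabla_j f=\tfrac{R}{2}|\nabla f|^{2}$, and since $Z=|\nabla f|^{p-2}$ by the definition in Lemma \ref{l1}, the integrand collapses to $Z R^{ij}\nabla_i f\nabla_j f=\tfrac{R}{2}|\nabla f|^{p}$. Hence
\[
p\int_{M}Z R^{ij}\nabla_i f\nabla_j f\,d\mu=\frac{p}{2}\int_{M}R|\nabla f|^{p}\,d\mu,
\]
which replaces the one dimension-sensitive term by an ordinary $\int_M R|\nabla f|^p\,d\mu$ integral.

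After this substitution I would set $n=2$ in the remaining coefficients and combine the two integrals of the form $\int_M R|\nabla f|^p\,d\mu$. The coefficient bookkeeping is the only thing to track carefully: the term $-(1+\rho p-\rho n)$ from Proposition \ref{p1} becomes $-(1+\rho p-2\rho)$, and adding the new $+\tfrac{p}{2}$ yields the combined coefficient $-(1+\rho p-2\rho-\tfrac{p}{2})$, exactly as in (\ref{R21}). The factor $(1-n\rho)$ multiplying $\lambda(t_0)\int_M R|f|^p\,d\mu$ becomes $(1-2\rho)$, and the two $\Delta\phi$-terms carry over unchanged, giving part (1). For part (2) the identical manipulation applied to (\ref{e21}) — with the $\Delta\phi$-terms dropped because $\phi$ does not depend on $t$ — produces (\ref{R221}).

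There is no serious obstacle here: the statement is a direct corollary and the argument is a specialization rather than a new estimate. The mild points requiring attention are to invoke $R_{ij}=\tfrac{R}{2}g_{ij}$ correctly (valid only in dimension two, which is why the hypothesis $M^2$ is essential), and to verify that $\rho<\tfrac{1}{2}=\tfrac{1}{2(n-1)}\big|_{n=2}$ is exactly the short-time-existence range inherited from Proposition \ref{p1}, so that the first-variation formula being specialized is legitimately available.
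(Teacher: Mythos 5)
Your proposal is correct and follows essentially the same route as the paper: both simply substitute $Ric=\tfrac{1}{2}Rg$ (so $p\int_M Z R^{ij}\nabla_i f\nabla_j f\,d\mu=\tfrac{p}{2}\int_M R|\nabla f|^p\,d\mu$) into (\ref{e2}) and (\ref{e21}) and set $n=2$. Note only that your combined coefficient $-(1+\rho p-2\rho-\tfrac{p}{2})$ is the correct one; the ``$\rho\phi$'' in the stated formulas (and the missing $R$ in (\ref{R221})) are typographical slips in the paper, not discrepancies in your argument.
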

\begin{proof}
In dimension $n=2$, we have $Ric=\frac{1}{2}Rg$, then (\ref{e2}) and (\ref{e21}) imply  that (\ref{R21}) and (\ref{R221}) respectively.
\end{proof}
\begin{lemma}
Let  $(M^{2}, g(t))$, $t\in[0,T)$  be  a solution of  the  Ricci-Bourguignon  flow
on a closed surface $(M^{2}, g_{0})$ with nonnegative scalar curvature for $\rho<\frac{1}{2}$, $\phi$ be  independent of $t$ and $p\geq 2$. If $\lambda(t)$ denotes the
evolution of  the first  eigenvalue of the weighted $p$-Laplacian  under the Ricci-Bourguignon flow, then
\begin{equation*}
\frac{\lambda(0)}{(1-c(1-2\rho)t)^{\frac{p}{2}}}\leq\lambda(t)
\end{equation*}
on $(0,T')$ where $c=\mathop {\min}\limits_{x \in M}\,R(0)$ and  $T'=\min\{T, \frac{1}{c(1-2\rho)}\}$.
\end{lemma}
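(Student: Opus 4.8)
The plan is to specialize the general variation formula of Corollary \ref{c3} to the surface case $n=2$, where $Ric=\frac12 Rg$, and then feed in the scalar-curvature lower bound coming from the maximum principle, exactly as in the proof of Theorem \ref{t1}. The crucial preliminary observation is an algebraic simplification of the coefficients. Writing the two-dimensional variation formula as
\begin{equation*}
\frac{d}{dt}\lambda(t,f(t))\big|_{t=t_0}=(1-2\rho)\lambda(t_0)\int_M R|f|^p\,d\mu-\Big(1+\rho p-2\rho-\tfrac{p}{2}\Big)\int_M R|\nabla f|^p\,d\mu ,
\end{equation*}
one checks that $1+\rho p-2\rho-\frac p2=(1-2\rho)\big(1-\frac p2\big)$, so the formula becomes
\begin{equation*}
\frac{d}{dt}\lambda(t,f(t))\big|_{t=t_0}=(1-2\rho)\lambda(t_0)\int_M R|f|^p\,d\mu+(1-2\rho)\Big(\tfrac p2-1\Big)\int_M R|\nabla f|^p\,d\mu .
\end{equation*}
Because $\rho<\frac12$, $p\geq2$ and $R\geq0$, both terms on the right are nonnegative. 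I emphasize that the second term must be kept, not discarded, since it is precisely what produces the exponent $\frac p2$ rather than $1$.

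Next I would invoke the maximum principle for the scalar curvature. On a surface $|Ric|^2=\frac12 R^2$, so part (5) of Lemma \ref{l} reduces to $\partial_t R=(1-2\rho)(\Delta R+R^2)$; comparison with the ODE $y'=(1-2\rho)y^2$, $y(0)=c=R_{\min}(0)\geq0$, yields
\begin{equation*}
R_{g(t)}\geq\sigma(t)=\frac{c}{1-c(1-2\rho)t}\qquad\text{on }[0,T'),\quad T'=\min\Big\{T,\tfrac1{c(1-2\rho)}\Big\}.
\end{equation*}
In particular nonnegativity of $R$ is preserved, so the two integrals above remain genuinely nonnegative throughout $(0,T')$.

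Then, using the normalization $\int_M|f|^p\,d\mu=1$ together with $\int_M|\nabla f|^p\,d\mu=\lambda(t_0)$, and replacing $R$ by its lower bound $\sigma(t_0)$, the two terms combine to
\begin{equation*}
\frac{d}{dt}\lambda(t,f(t))\big|_{t=t_0}\geq(1-2\rho)\sigma(t_0)\lambda(t_0)\Big(1+\tfrac p2-1\Big)=\frac p2\,(1-2\rho)\,\sigma(t_0)\,\lambda(t_0).
\end{equation*}
Following the device of Cao and Wu used in Theorem \ref{t1}, this pointwise-at-$t_0$ inequality promotes, on a small neighborhood $I_0$ of $t_0$, to the differential inequality $\frac{d}{dt}\lambda\geq\frac{p}{2}\,\frac{(1-2\rho)c}{1-c(1-2\rho)t}\,\lambda$, equivalently $\frac{d}{dt}\big[\lambda(t)(1-c(1-2\rho)t)^{p/2}\big]\geq0$. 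Integrating over $[t_1,t_0]\subset I_0$ and using $\lambda(t_0,f(t_0))=\lambda(t_0)$ with $\lambda(t_1,f(t_1))\geq\lambda(t_1)$ shows that $Q(t)=\lambda(t)(1-c(1-2\rho)t)^{p/2}$ is nondecreasing, whence $Q(t)\geq Q(0)=\lambda(0)$, which is the asserted inequality.

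The routine parts are the two integrations by parts already absorbed into Corollary \ref{c3} and the explicit solution of the comparison ODE. The one genuinely delicate point, as throughout the paper, is the differentiability/infimum argument: $\lambda(t)$ and its eigenfunction need not be $C^1$, so the estimate is first derived for the smooth family $\lambda(t,f(t))$ and only afterwards transferred to $\lambda(t)$ itself via $\lambda(t,f(t))\geq\lambda(t)$ with equality at $t_0$. Care is needed to ensure that the monotonicity of $Q$ survives this passage on all of $(0,T')$ rather than merely near a single $t_0$, which is exactly the mechanism already set up in the proof of Theorem \ref{t1}.
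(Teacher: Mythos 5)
Your proposal matches the paper's proof essentially step for step: specialize the variation formula of Corollary \ref{c3} to $n=2$ using $Ric=\frac12 Rg$, observe that the coefficients combine to $\frac p2(1-2\rho)$, bound $R$ below by $\sigma(t)=\frac{c}{1-c(1-2\rho)t}$ via the maximum principle, and integrate the resulting differential inequality using $\lambda(t,f(t))\geq\lambda(t)$ with equality at $t_0$. Your formulation via the monotone quantity $\lambda(t)(1-c(1-2\rho)t)^{p/2}$ is just a slightly more careful phrasing of the paper's direct integration from $0$ to $t_0$, so no substantive difference.
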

\begin{proof}
On a surface  we have $Ric=\frac{1}{2}Rg$, and  for the scalar curvature $R$ on a closed surface $M$ along  the Ricci-Bourguignon flow we get
\begin{equation}
 \frac{c}{1-c(1-2\rho)t}\leq R,\,\,\,\,\,\,\,\,\,\,\, \text{on}\,\,\,\,\,\,[0,T')
\end{equation}
where $T'=\min\{T, \frac{1}{c(1-2\rho)}\}$. According to (\ref{R221})  and $\int_{M}|f|^{p}d\mu=1$ we have
\begin{equation}
 \frac{p}{2}\frac{c(1-2\rho)\lambda(t,f(t))}{1-c(1-2\rho)t}\leq \frac{d}{dt}\lambda(t,f(t)),
\end{equation}
 in any  small enough neighborhood of $t_{0}$. After the integrating above inequality  with respect to time $t$, this becomes
\begin{equation*}
\frac{\lambda(0, f(0))}{(1-c(1-2\rho)t)^{\frac{p}{2}}}\leq\lambda(t_{0}).
\end{equation*}
Now $\lambda(0,f (0))\geq \lambda(0)$ results that $\frac{\lambda( 0)}{(1-c(1-2\rho)t)^{\frac{p}{2}}}\leq\lambda(t_{0})$.
Since $t_{0}$ is arbitrary, then $\frac{\lambda(0)}{(1-c(1-2\rho)t)^{\frac{p}{2}}}\leq\lambda(t)$  on $(0,T')$.
\end{proof}
\begin{lemma}
Let $(M^{2},g_{0})$ be a closed surface with   nonnegative scalar curvature and   $\phi$ be  independent of $t$, then the eigenvalues of  the weighted $p$-Laplacian are increasing under   the Ricc-Bourguignon flow for $\rho<\frac{1}{2}$.
\end{lemma}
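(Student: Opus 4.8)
The plan is to specialize the surface variation formula to the present hypotheses and then simply read off the sign of $\frac{d}{dt}\lambda$. Since $n=2$ and $\phi$ is independent of $t$, I would invoke the surface identity \eqref{R221}, which at an arbitrary time $t_0$ reads
\begin{equation*}
\frac{d}{dt}\lambda(t,f(t))\Big|_{t=t_{0}}=(1-2\rho)\lambda(t_{0})\int_{M}R\,|f|^{p}d\mu-\Big(1+\rho p-2\rho-\tfrac{p}{2}\Big)\int_{M}R|\nabla f|^{p}d\mu .
\end{equation*}
The decisive step is the algebraic factorization
\begin{equation*}
1+\rho p-2\rho-\tfrac{p}{2}=(1-2\rho)\Big(1-\tfrac{p}{2}\Big),
\end{equation*}
which recasts the evolution as
\begin{equation*}
\frac{d}{dt}\lambda(t,f(t))\Big|_{t=t_{0}}=(1-2\rho)\left[\lambda(t_{0})\int_{M}R\,|f|^{p}d\mu+\Big(\tfrac{p}{2}-1\Big)\int_{M}R|\nabla f|^{p}d\mu\right].
\end{equation*}

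Next I would check the sign of each factor. The hypothesis $\rho<\frac{1}{2}$ gives $1-2\rho>0$; the first eigenvalue obeys $\lambda(t_{0})\geq0$, being an infimum of $\int_M|\nabla f|^p\,d\mu$ over unit $L^p$-norm functions; nonnegativity of $R$ makes both integrals $\int_M R|f|^p\,d\mu$ and $\int_M R|\nabla f|^p\,d\mu$ nonnegative; and for $p\geq2$ the factor $\frac{p}{2}-1$ is nonnegative. Consequently the bracket is a sum of two nonnegative terms, so $\frac{d}{dt}\lambda(t,f(t))|_{t=t_{0}}\geq0$.

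Finally I would promote this pointwise derivative inequality to genuine monotonicity of $\lambda(t)$ using the device already exploited in Theorem \ref{t1}. In a small neighborhood $I_0$ of $t_0$ the smooth quantity $\lambda(t,f(t))$ is nondecreasing; because $f(t)$ is an admissible test function with $\int_M|f(t)|^p\,d\mu=1$ we have $\lambda(t,f(t))\geq\lambda(t)$ for $t\in I_0$, with equality at $t_0$. Comparing values at $t_1<t_0$ then yields $\lambda(t_0)=\lambda(t_0,f(t_0))\geq\lambda(t_1,f(t_1))\geq\lambda(t_1)$, and since $t_0$ is arbitrary this gives that $\lambda(t)$ is nondecreasing along the Ricci-Bourguignon flow.

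The main obstacle is not analytic but structural: the result hinges entirely on spotting the factorization $1+\rho p-2\rho-\frac{p}{2}=(1-2\rho)(1-\frac{p}{2})$, which is what forces the two surviving terms to carry the \emph{same} sign under $\rho<\frac12$, $p\geq2$, and $R\geq0$. The only genuine care needed is the standard differentiability caveat, handled exactly as in Theorem \ref{t1} through the test-function inequality $\lambda(t,f(t))\geq\lambda(t)$; everything else reduces to reading off signs from \eqref{R221}.
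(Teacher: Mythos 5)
Your argument is correct and follows essentially the same route as the paper: specialize to (\ref{R221}) with $\phi$ time-independent, use that nonnegativity of the scalar curvature is preserved on a surface along the Ricci--Bourguignon flow, read off the sign of $\frac{d}{dt}\lambda(t,f(t))$, and promote this to monotonicity of $\lambda(t)$ via the test-function comparison $\lambda(t,f(t))\geq\lambda(t)$ exactly as in Theorem \ref{t1}. You are in fact more explicit than the paper, which merely asserts the positivity of the right-hand side of (\ref{R221}): your factorization $1+\rho p-2\rho-\frac{p}{2}=(1-2\rho)\bigl(1-\frac{p}{2}\bigr)$ makes clear that the sign analysis requires $p\geq 2$ (a hypothesis stated in the preceding lemma but omitted here, though the paper's own argument needs it too), and your conclusion of nondecreasing rather than strictly increasing is the accurate one when only $R\geq 0$ is assumed.
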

\begin{proof}
Along  the  Ricci-Bourguignon  flow on a surface, we have
$$\frac{\partial R}{\partial t}=(1-2\rho)(\Delta R+R^{2})$$
by the scalar maximum principle, the nonnegativity of the
 scalar curvature is preserved along the Ricci-Bourguignon flow (see \cite{GC}). Then (\ref{R221}) implies that
$\frac{d}{dt}\lambda(t,f(t))|_{t=t_{0}}>0$,
 this results that in any sufficiently small neighborhood of $t_{0}$ as $I_{0}$,  we get
$\frac{d}{dt}\lambda(t,f(t))>0$. Hence by integrating on interval  $[t_{1}, t_{0}]\subset I_{0}$, we have $\lambda(t_{1},f(t_{1}))\leq \lambda(t_{0},f(t_{0}))$.
Since $\lambda(t_{0},f(t_{0}))=\lambda(t_{0})$ and  $\lambda(t_{1},f(t_{1}))\geq\lambda(t_{1})$ we conclude that
$\lambda(t_{1})\leq \lambda(t_{0})$.
Therefore the quantity  $\lambda(t)$ is strictly increasing  in any sufficiently small neighborhood of $t_{0}$, but $t_{0}$ is arbitrary, then $\lambda(t)$ is strictly increasing along the Ricci-Bourguignon  flow on $[0,T)$.
\end{proof}


\subsection{Variation of $\lambda(t)$ on homogeneous manifolds }
In this section, we consider the behavior of the first eigenvalue when we evolve an initial   homogeneous metric along the flow (\ref{rb0}).
\begin{proposition}\label{P1}
Let $(M^{n},g(t))$ be a solution of the  Ricci-Bourguignon  flow on
the smooth closed  homogeneous  manifold  $(M^{n},g_{0})$ for $\rho<\frac{1}{2(n-1)}$. Let  $\lambda(t)$
be denote the evaluation of an eigenvalue under the Ricci-Bourguignon  flow, then
\begin{enumerate}
  \item If $\frac{\partial\phi}{\partial t}=\Delta\phi$ then
\begin{eqnarray}\nonumber
\frac{d}{dt}\lambda(t,f(t))|_{t=t_{0}}&=&-\rho p R\lambda(t_{0})
+p\int_{M}Z R^{ij}\nabla_{i} f\nabla_{j} f\,d\mu
+\lambda(t_{0})\int_{M}(\Delta \phi) |f|^{p}\,d\mu\\\label{hm1}&&-\int_{M}(\Delta \phi)|\nabla f|^{p}d\mu.
\end{eqnarray}
  \item If  $\phi$ is  independent of $t$ then
\begin{equation}\label{hm2}
\frac{d}{dt}\lambda(t,f(t))|_{t=t_{0}}=-\rho p R\lambda(t_{0})+p\int_{M}Z R^{ij}\nabla_{i} f\nabla_{j} f\,d\mu.
\end{equation}
\end{enumerate}
\end{proposition}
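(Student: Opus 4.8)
The plan is to derive both identities directly from the general first variation formula already established, specializing it to the homogeneous setting. The single geometric fact that makes the statement work is that on a homogeneous Riemannian manifold the isometry group acts transitively, so any scalar invariant of the metric---in particular the scalar curvature $R$---is constant on $M$ at each fixed time. This is what lets us pull $R$ outside every integral in which it appears.

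For part (1), I would start from formula (\ref{e2}) of Proposition \ref{p1}. Since $R$ is spatially constant, I would factor it out of the two scalar-curvature integrals and then invoke the two normalizations that hold at $t=t_0$: the constraint $\int_M |f|^p\,d\mu=1$ gives $\int_M R|f|^p\,d\mu = R$, while the eigenvalue relation $\int_M|\nabla f|^p\,d\mu=\lambda(t_0)\int_M|f|^p\,d\mu$ gives $\int_M R|\nabla f|^p\,d\mu = R\,\lambda(t_0)$. Substituting these, the first two terms of (\ref{e2}) collapse to
\begin{equation*}
\lambda(t_0)\bigl[(1-n\rho)-(1+\rho p-\rho n)\bigr]R = -\rho p R\,\lambda(t_0),
\end{equation*}
where the crucial cancellation is $(1-n\rho)-(1+\rho p-\rho n) = -\rho p$. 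The remaining three terms of (\ref{e2})---the Ricci term $p\int_M Z R^{ij}\nabla_i f\nabla_j f\,d\mu$ and the two $\Delta\phi$ terms---are left untouched, which yields (\ref{hm1}) exactly.

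For part (2), the argument is identical but starts from Corollary \ref{c3}, equation (\ref{e21}), which already omits the $\Delta\phi$ terms because $\phi$ is independent of $t$. Again using $R$ constant together with the two normalizations, the scalar-curvature integrals combine to $-\rho p R\,\lambda(t_0)$ by the same cancellation, and only the Ricci term survives, giving (\ref{hm2}).

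There is no genuine obstacle here; the content is entirely the spatial constancy of $R$ under homogeneity, after which everything reduces to the coefficient bookkeeping $(1-n\rho)-(1+\rho p-\rho n)=-\rho p$. The one point that deserves a careful word in the write-up is justifying why $R$ may be treated as a constant: it follows because a homogeneous metric has a transitive isometry group and $R$ is an isometry invariant, so $R$ is independent of the point $x\in M$ (though it may still vary with $t$ along the flow). Once that is noted, both identities are immediate consequences of Proposition \ref{p1} and Corollary \ref{c3}.
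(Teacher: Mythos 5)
Your proposal is correct and follows essentially the same route as the paper: homogeneity makes $R$ spatially constant, so it factors out of the integrals in (\ref{e2}) and (\ref{e21}), and the normalizations $\int_M|f|^p\,d\mu=1$ and $\int_M|\nabla f|^p\,d\mu=\lambda(t_0)$ collapse the scalar-curvature terms via $(1-n\rho)-(1+\rho p-\rho n)=-\rho p$. If anything, your write-up is cleaner than the paper's, which states the normalizations with exponent $2$ in place of $p$ and omits the explicit coefficient cancellation.
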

\begin{proof}
Since the evolving metric remains homogeneous and a homogeneous
manifold has constant scalar curvature. Therefore (\ref{e2}) implies
that
\begin{eqnarray*}
\frac{d}{dt}\lambda(t,f(t))|_{t=t_{0}}&=&(1-n\rho)\lambda(t_{0}) R\int_{M}\,f^{2}d\mu+((n-p)\rho-1)R\int_{M}|\nabla
f|^{2}d\mu \\
&&+p\int_{M}Z R^{ij}\nabla_{i} f\nabla_{j} f\,d\mu
+\lambda(t_{0})\int_{M}(\Delta \phi) |f|^{p}\,d\mu\\&&-\int_{M}(\Delta \phi)|\nabla f|^{p}d\mu.
\end{eqnarray*}
But $\int_{M}\,f^{2}d\mu=1$ and $\int_{M}|\nabla f|^{2}d\mu=1$ therefore  last equation results that (\ref{hm1}) and (\ref{hm2}).
\end{proof}
\subsection{Variation of $\lambda(t)$ on $3$-dimensional  manifolds}
In this section, we consider the behavior of $\lambda(t)$ on $3$-dimensional  manifolds.
 \begin{proposition}
 Let $(M^{3},g(t))$ be a solution of the  Ricci-Bourguignon  flow (\ref{rb}) for $\rho<\frac{1}{4}$ on a closed Riemannian manifold $M^{3}$  whose Ricci curvature is initially positive  and there exists  $0\leq\epsilon\leq \frac{1}{3}$ such that
 $$Ric \geq \epsilon Rg.$$
If  $\phi$ is  independent of $t$ and $\lambda(t)$ denotes the
evolution of the first eigenvalue of the weighted $p$-Laplacian under the Ricci-Bourguignon flow then the quantity $e^{-\int_{0}^{t}A(\tau)d\tau}\lambda(t)$ is nondecreasing along  the  Ricci-Bourguignon  flow (\ref{rb})  for $p\leq3$,
where
 \begin{equation*}
A(t)=\frac{3c(1-3\rho)}{3-2(1-3\rho)c t}+(3\rho+p\epsilon-1-\rho p)\left(-2(1-\rho)t+\frac{1}{C}\right)^{-1},
\end{equation*}
$C= R_{\max }(0)$ and $c=R_{\min }(0)$.
\end{proposition}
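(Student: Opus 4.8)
The plan is to specialize the variation formula in Corollary \ref{c3} to $n=3$ with $\phi$ time-independent, namely
\[
\frac{\partial}{\partial t}\lambda(t,f(t))\big|_{t=t_0}=\lambda(t_0)(1-3\rho)\int_M R|f|^p\,d\mu-(1+\rho p-3\rho)\int_M R|\nabla f|^p\,d\mu+p\int_M Z R^{ij}\nabla_i f\nabla_j f\,d\mu,
\]
and to convert it into a differential inequality $\frac{d}{dt}\lambda\geq A(t)\lambda$; integrating the equivalent inequality $\frac{d}{dt}\big[e^{-\int_0^t A}\lambda\big]\geq0$ on a small interval about $t_0$ (as in Theorem \ref{t1}) then yields that $e^{-\int_0^t A(\tau)d\tau}\lambda(t)$ is nondecreasing.

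First I would feed the pinching $Ric\geq\epsilon Rg$ into the curvature term: since $R^{ij}\nabla_i f\nabla_j f\geq\epsilon R|\nabla f|^2$ and $Z|\nabla f|^2=|\nabla f|^p$ with $Z\geq0$, $R>0$, the last two integrals combine to give the lower bound $(3\rho+p\epsilon-1-\rho p)\int_M R|\nabla f|^p\,d\mu$. Next I would control $R$ from both sides through the reaction term of Lemma \ref{l}(5). The bound $|Ric|^2\geq R^2/3$ gives, via the maximum principle exactly as in Theorem \ref{t1}, the lower bound $R\geq\sigma(t)=\frac{3c}{3-2(1-3\rho)ct}$. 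For the upper bound I would use that in dimension three a positive Ricci tensor with eigenvalues $\lambda_i>0$ satisfies $|Ric|^2=\sum_i\lambda_i^2\leq\big(\sum_i\lambda_i\big)^2=R^2$; this upgrades the reaction term to the comparison ODE $y'=2(1-\rho)y^2$ and yields $R\leq\gamma(t)=\big(\tfrac1C-2(1-\rho)t\big)^{-1}$ on the interval where $\gamma$ is defined.

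Using $\int_M|f|^p\,d\mu=1$ together with $R\geq\sigma(t)$ and $1-3\rho>0$ bounds the first term below by $\lambda(t_0)(1-3\rho)\sigma(t_0)$, which is $\lambda(t_0)$ times the first summand of $A(t_0)$. The delicate point is the second term, where I must first verify the sign of the coefficient $3\rho+p\epsilon-1-\rho p=\rho(3-p)+p\epsilon-1$: for $p\leq3$ one has $\rho(3-p)\leq\frac{3-p}{4}$ and $p\epsilon\leq\frac p3$, so the sum is $\leq-\frac14+\frac{p}{12}\leq0$. Because this coefficient is nonpositive, I apply the \emph{upper} bound $R\leq\gamma(t)$ together with the eigenvalue identity $\int_M|\nabla f|^p\,d\mu=\lambda(t_0)$ to bound the second term below by $(3\rho+p\epsilon-1-\rho p)\gamma(t_0)\lambda(t_0)$, which is precisely $\lambda(t_0)$ times the second summand of $A(t_0)$.

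Combining the two estimates gives $\frac{\partial}{\partial t}\lambda|_{t=t_0}\geq A(t_0)\lambda(t_0)$; since $t_0$ is arbitrary this upgrades to the pointwise inequality $\frac{d}{dt}\lambda(t,f(t))\geq A(t)\lambda(t,f(t))$, whence integration delivers the claim. I expect the main obstacle to be justifying the upper bound $|Ric|^2\leq R^2$ for all $t$, which requires knowing that positivity of the Ricci tensor and the pinching $Ric\geq\epsilon Rg$ are \emph{preserved} along the Ricci-Bourguignon flow in dimension three (the analogue of Hamilton's pinching estimates); this preservation is what makes the comparison bound $\gamma(t)$ legitimate, and alongside it the careful sign check of the gradient-term coefficient is the step where the hypotheses $p\leq3$, $\rho<\frac14$ and $\epsilon\leq\frac13$ are genuinely used.
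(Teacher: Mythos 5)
Your proposal follows essentially the same route as the paper's proof: specialize Corollary \ref{c3} to $n=3$ with $\phi$ time-independent, use the preserved pinching $Ric\geq\epsilon Rg$ to absorb the curvature term, bound $R$ below by $\sigma(t)$ (via $|Ric|^{2}\geq R^{2}/3$) and above by $\gamma(t)$ (via $|Ric|^{2}\leq R^{2}$, legitimate by preserved positivity of Ricci in dimension three), and integrate the resulting inequality $\frac{d}{dt}\lambda\geq A(t)\lambda(t)$ exactly as in Theorem \ref{t1}. Your explicit check that the coefficient $3\rho+p\epsilon-1-\rho p$ is nonpositive when $p\leq 3$, $\rho<\frac{1}{4}$, $\epsilon\leq\frac{1}{3}$ --- which is what makes the use of the upper bound $R\leq\gamma(t)$ point in the right direction --- is left implicit in the paper, so your argument is the same proof with that step made precise.
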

\begin{proof}
In \cite{GC} has been shown that the pinching inequality $Ric\geq \epsilon Rg $ and  nonnegative scalar curvature are  preserved along  the  Ricci-Bourguignon  flow (\ref{rb}) on closed manifold $M^{3}$, then using
 (\ref{e21})  we obtain
\begin{eqnarray*}
\frac{d}{dt}\lambda(f,t)|_{t=t_{0}}&\geq&(1-3\rho)\lambda(t_{0})\int_{M}R\,f^{2}d\mu+(3\rho-1-\rho p)\int_{M}R|\nabla
f|^{2}d\mu\\&& +p\epsilon\int_{M}R|\nabla f|^{2}d\mu\\
&=&(1-3\rho)\lambda(t_{0})\int_{M}R\,f^{2}d\mu+(3\rho+p\epsilon-1-\rho p)\int_{M}R|\nabla
f|^{2}d\mu,
\end{eqnarray*}
on the other hand
 the scalar curvature under the  Ricci-Bourguignon flow evolves by (\ref{va}) for $n=3$.
By $|Ric|^{2}\leq R^{2}$ we have
\begin{equation*}
\frac{\partial R}{\partial t}\leq(1-4\rho)\Delta R+2(1-\rho) R^{2}.
\end{equation*}
Let $\gamma(t)$ be the solution to the ODE $y'=2(1-\rho)y^{2}$ with initial value $C=R_{\max}(0)$. By the maximum principle, we have
\begin{equation}\label{s}
R(t)\leq\gamma(t)=\left(-2(1-\rho)t+\frac{1}{C}\right)^{-1}
\end{equation}
on $[0,T')$, where $T'=\min\{T,\frac{1}{2(1-\rho)C}\}$. Also, similar to proof of Theorem \ref{t1}, we have
\begin{equation}\label{g}
R(t)\geq \sigma(t)=\frac{3c}{3-2(1-3\rho)c t}\,\,\,\,\text{on}\,\,\,\,[0,T).
\end{equation}
Hence
\begin{eqnarray*}
\frac{d}{dt}\lambda(t,f(t))|_{t=t_{0}}&\geq&(1-3\rho)\lambda(t_{0})\frac{3c}{3-2(1-3\rho)c t_{0}}\\&&+(\rho-1+2\epsilon)\lambda(t_{0})\left(-2(1-\rho)t_{0}+\frac{1}{C}\right)^{-1}\\&=&\lambda(t_{0})A(t_{0})
\end{eqnarray*}
this results that in any sufficiently small neighborhood of $t_{0}$ as $I_{0}$,  we obtain
$$\frac{d}{dt}\lambda(t,f(t))\geq \lambda(f,t) A(t). $$
Integrating of both sides of  the last inequality with respect to $t$ on $[t_{1}, t_{0}]\subset I_{0}$, we can write
\begin{equation*}
\ln \frac{\lambda(t_{0},f(t_{0}))}{ \lambda(t_{1},f(t_{1}))}>\int_{t_{1}}^{t_{0}}A(\tau)d\tau.
\end{equation*}
Since $\lambda(t_{0},f(t_{0}))=\lambda(t_{0})$ and  $\lambda(t_{1},f(t_{1}))\geq\lambda(t_{1})$ we conclude that
\begin{equation*}
\ln \frac{\lambda(t_{0})}{ \lambda(t_{1})}>\int_{t_{1}}^{t_{0}}A(\tau)d\tau.
\end{equation*}
 that is the quantity  $\lambda(t)e^{-\int_{0}^{t}A(\tau)d\tau}$ is strictly increasing  in any sufficiently small neighborhood of $t_{0}$. Since $t_{0}$ is arbitrary, then  $\lambda(t)e^{-\int_{0}^{t}A(\tau)d\tau}$ is strictly increasing along the Ricci-Bourguignon  flow on $[0,T)$.
\end{proof}
\begin{proposition}
Let $(M^{3},g(t))$ be a solution to the  Ricci-Bourguignon flow for $\rho<0$ on a closed homogeneous  $3$-manifold whose Ricci curvature is initially nonnegative and  $\phi$ be  independent of $t$ then the first  eigenvalues of the weighted $p$-Laplacian is  increasing.
\end{proposition}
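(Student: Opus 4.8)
The plan is to reduce everything to the homogeneous variation formula (\ref{hm2}) of Proposition \ref{P1} and to show that, under the stated hypotheses, both of its terms are nonnegative. Since $\phi$ is independent of $t$, (\ref{hm2}) reads
\[
\frac{d}{dt}\lambda(t,f(t))\big|_{t=t_{0}}=-\rho p R\,\lambda(t_{0})+p\int_{M}|\nabla f|^{p-2} R^{ij}\nabla_{i} f\nabla_{j} f\,d\mu,
\]
where $Z=|\nabla f|^{p-2}$. The second integrand equals $|\nabla f|^{p-2}\,Ric(\nabla f,\nabla f)$, which is nonnegative as soon as $Ric\geq0$, while the first term is nonnegative provided $\rho<0$, $R\geq0$ and $\lambda(t_{0})>0$. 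Thus the entire proof comes down to verifying that nonnegativity of $Ric$ (and hence of $R$) is preserved along the flow.

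First I would note that the Ricci-Bourguignon flow is invariant under the isometry group of $g_{0}$, so a homogeneous initial metric stays homogeneous; in particular $R=R(t)$ is spatially constant and $\Delta R\equiv0$. Evolution formula (\ref{va}) with $n=3$ then reduces to the ODE
\[
\frac{dR}{dt}=2|Ric|^{2}-2\rho R^{2},
\]
in which both terms are nonnegative because $\rho<0$. Hence $R(t)$ is nondecreasing, and since $Ric(0)\geq0$ forces $R(0)=\operatorname{tr}Ric(0)\geq0$, we obtain $R(t)\geq R(0)\geq0$ for all $t$. This settles the sign of the first term and, together with the positivity of the first nonzero eigenvalue, gives $-\rho p R\,\lambda(t_{0})\geq0$.

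For the Ricci tensor itself I would invoke the preservation result of \cite{GC}: on a closed $3$-manifold, nonnegativity of the Ricci curvature is preserved along the Ricci-Bourguignon flow for $\rho<\frac{1}{2(n-1)}=\frac14$, and $\rho<0$ certainly satisfies this. Consequently $Ric\geq0$ on $M\times[0,T)$, so $Ric(\nabla f,\nabla f)\geq0$ pointwise and the second term of (\ref{hm2}) is nonnegative as well. Combining the two estimates yields $\frac{d}{dt}\lambda(t,f(t))|_{t=t_{0}}\geq0$.

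Finally I would close the argument exactly as in the proof of Theorem \ref{t1}: the inequality holds in a small neighborhood $I_{0}$ of the arbitrary time $t_{0}$, and integrating over $[t_{1},t_{0}]\subset I_{0}$ together with $\lambda(t_{0},f(t_{0}))=\lambda(t_{0})$ and $\lambda(t_{1},f(t_{1}))\geq\lambda(t_{1})$ gives $\lambda(t_{1})\leq\lambda(t_{0})$; since $t_{0}$ is arbitrary, $\lambda(t)$ is monotone increasing along the flow. The main obstacle is the preservation of $Ric\geq0$: the scalar part is immediate from the homogeneous ODE above, but the tensorial statement genuinely relies on the $3$-dimensional maximum-principle argument of \cite{GC} exploiting the special structure of the curvature operator in dimension three, which is precisely where the restriction to $M^{3}$ enters.
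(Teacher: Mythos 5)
Your proposal is correct and follows essentially the same route as the paper: the paper's (very terse) proof likewise invokes the dimension-three preservation of nonnegative Ricci curvature from \cite{GC} and then reads off from (\ref{hm2}) that both terms $-\rho p R\lambda(t_{0})$ (using $\rho<0$, $R\geq0$) and $p\int_{M}Z\,Ric(\nabla f,\nabla f)\,d\mu$ are nonnegative. The extra details you supply (the homogeneous scalar ODE and the integration step from Theorem \ref{t1}) are consistent elaborations of what the paper leaves implicit.
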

\begin{proof}
In dimension three,  the Ricci-Bourguignon flow preseves the nonnegativity of the Ricci curvature  is preserved.
From (\ref{hm2}), its implies that $\lambda(t)$ is increasing.
\end{proof}
\section{Example}
In this section, we consider the initial Riemannian manifold $(M^{n},g_{0})$ is Einstein manifold and then find evolving  first eigenvalue  of the weighted $p$-Laplace operator along the Ricci-Bourguignon flow.
\begin{example}
Let $(M^{n},g_{0})$ be an Einstein manifold i.e. there exists a constant $a $ such that $Ric(g_{0})=ag_{0}$. Assume that
 a solution to the Ricci-Bourguignon flow is of the form
\begin{equation*}
g(t)=u(t)g_{0},\,\,\,\, u(0)=1
\end{equation*}
where $u(t)$ is a positive function. By a straitforward computation, we have
$$\frac{\partial g}{\partial t}=u'(t)g_{0},\,\,\,Ric(g(t))=Ric(g_{0})= ag_{0}=\frac{a}{u(t)}g(t),\,\,
R_{g(t)}=\frac{an}{u(t)},$$
for this to be  a solution of the Ricci-Bourguignon  flow, we require
\begin{equation*}
u'(t)g_{0}=-2Ric(g(t))+2\rho R_{g(t)}g(t)= (-2a+2\rho a n) g_{0}
\end{equation*}
this shows that
 $$u(t)=(-2a+2\rho a n) t+1,$$
 so  $g(t)$ is an Einstein metric.
Using formula (\ref{e21}) for evolution of first eigenvalue  along the Ricci-Bourguignon flow, we obtain the following relation
\begin{eqnarray*}
\frac{d}{dt}\lambda(t,f(t))|_{t=t_{0}}&=&(1-n\rho)\frac{an}{u(t_{0})}\lambda(t_{0})\int_{M}\,|f|^{p}d\mu +2\frac{a}{u(t_{0})}\int_{M}|\nabla f|^{p}d\mu\\&&-((p-n)\rho-1)\frac{an}{u(t_{0})}\int_{M}|\nabla f|^{p}d\mu=\frac{pa(1-n\rho)\lambda(t_{0})}{u(t_{0})},
\end{eqnarray*}
this results that in any sufficiently small neighborhood of $t_{0}$ as $I_{0}$,  we get
\begin{equation*}
\frac{d}{dt}\lambda(t,f(t))=\frac{pa(1-n\rho)\lambda(t,f(t))}{(-2a+2\rho a n) t+1}.
\end{equation*}
Integrating the last inequality with respect to $t$ on $[t_{1}, t_{0}]\subset I_{0}$, we have
\begin{equation*}
\ln \frac{\lambda(t_{0},f(t_{0}))}{ \lambda(t_{1},f(t_{1}))}=\int_{t_{1}}^{t_{0}}\frac{pa(1-n\rho)}{(-2a+2\rho a n) \tau+1}d\tau=\ln(\frac{-2a(1-n\rho ) t_{1}+1}{-2a(1-n\rho ) t_{0}+1})^{\frac{p}{2}}.
\end{equation*}
Since $\lambda(t_{0},f(t_{0}))=\lambda(t_{0})$ and  $\lambda(t_{1},f(t_{1}))\geq\lambda(t_{1})$ we conclude that
\begin{equation*}
\ln \frac{\lambda(t_{0})}{ \lambda(t_{1})}>\ln(\frac{-2a(1-n\rho ) t_{1}+1}{-2a(1-n\rho ) t_{0}+1})^{\frac{p}{2}},
\end{equation*}
 that is the quantity  $\lambda(t)[-2a(1-n\rho ) t+1]^{\frac{p}{2}}$ is strictly increasing   along the Ricci-Bourguignon  flow on $[0,T)$.
\end{example}


\end{document}